\newtheorem{theorem}{Theorem}[section]
\newtheorem{lemma}{Lemma}[section]
\newtheorem{corollary}{Corollary}[section]
\newtheorem{definition}{Definition}[section]
\newtheorem{example}{Example}[section]
\def\ps@pprintTitle{%
 \let\@oddhead\@empty
 \let\@evenhead\@empty
 \def\@oddfoot{\centerline{\thepage}}%
 \let\@evenfoot\@oddfoot}
\begin{document}

\begin{center}
{\bf {\Large Linear Diophantine Graphs}}
\end{center}

\begin{center}
{ \bf A. Nasr*$^1$, \ M. Anwar*$^2$, \ M. A. Seoud*$^3$,   \ A. Elsonbaty*$^4$}
\vspace{3mm}\\
 *Department of Mathematics, Faculty of Science, Ain Shams University, 11566, Abbassia, Cairo, Egypt.
\vspace{3mm}\\
e-mails: $^1$ \ \href{mailto:amr_fatouh@sci.asu.edu.eg}{\url{amr_fatouh@sci.asu.edu.eg}},\hspace{0.2cm} $^2$ \ \href{mailto:mohamedanwar@sci.asu.edu.eg}{\url{mohamedanwar@sci.asu.edu.eg}},\\
\hspace{1.5cm}$^3$ \ \href{mailto:m.a.seoud@sci.asu.edu.eg}{\url{m.a.seoud@sci.asu.edu.eg}},\quad  $^4$ \ \href{mailto:ahmedelsonbaty@sci.asu.edu.eg}{\url{ahmedelsonbaty@sci.asu.edu.eg}}
\end{center}


\begin{abstract}
This manuscript introduces linear Diophantine labelling, a new method for assigning labels to the vertices of finite, simple, undirected graphs. A key feature of this method is a divisibility condition imposed on the edges, incorporating number-theoretic properties into graph labelling. The study focuses on identifying maximal graphs that admit such labellings and computes their number of edges and the degree of each vertex. Number-theoretic techniques are employed to examine structural properties, including the characterization of maximum degree vertices and conditions for nonadjacent vertices. The manuscript also establishes necessary and sufficient conditions for vertices with equal degrees, offering new insights into the interaction between graph theory and number theory.
\end{abstract}
\par
\bigskip \noindent Keywords: Graph labelling, Prime graph, Diophantine graph, Equal degrees, $p$-adic valuation.
\par
\bigskip \noindent AMS Subject Classification: 05C07, 05C78, 11A05, 11A25, 11D88.


\section{Introduction}

\hspace{0.5cm} A graph labelling is an assignment of real values to the vertices or edges or both edges and vertices that meet some conditions. It constitutes a significant area within graph theory and has numerous applications, including in coding theory, radar systems, astronomy, missile guidance, communication networks, cryptography, network security, X-ray crystallography, and database management. In this manuscript, we investigate a new type of labeling, referred to as linear Diophantine labeling, which constitutes a generalization of prime labeling. Labeling problems are pivotal in studying computational complexity and algorithmic graph theory, serving as a bridge between number theory and graph theory. The concept of prime labelling was first introduced by R. Entringer and later formally defined in a paper by R. Tout \cite{tout}. A simple graph $G=(V, E)$ with $n$ vertices is said to have a prime labelling if there is a bijective function $f$ mapping $V$ to $\{1, 2, \dots, n\}$ in which for every $uv\in E$, the values $f(u),f(v)$ are relatively prime, where $V$ is the vertex set and $E$ is the edge set. Many results of prime labelling and other types of prime labelling can be found in the dynamic survey of graph labelling by J.A. Gallian \cite{Gallian}. Other variants of prime labelling have been introduced by many researchers (see, for example, \cite{2}, \cite{3}, \cite{7}, \cite{15}).

This paper gives a generalization for the concept of prime graphs. More precisely, a simple graph $G$ with $n$ vertices is defined as a linear Diophantine graph (or simply, a Diophantine graph) if there is a bijective function $f$ mapping $V$ to $\{1, 2, \dots, n\}$ in which for every $uv\in E$, the greatest common divisor of $f(u)$ and $f(v)$ divides $n$. Clearly, any prime graph is a Diophantine graph. Moreover, any Diophantine graph with a prime number of vertices is a prime graph. In a maximal Diophantine graph with $n$ vertices, an explicit formula of the number of edges and the degree of a vertex are proved, the full degree vertices are determined. Furthermore, we provide sufficient and necessary conditions that determine the equality of degrees among vertices whose degrees are smaller than $n-1$ in a maximal Diophantine graphs.

The paper is organized as follows. Section 2 includes some definitions, preliminaries, lemmas and direct observations. Section 3 presents the number of edges formula in a maximal Diophantine graph with $n$ vertices. Section 4 is split into three subsections, each unveiling important results related to a maximal Diophantine graph with order $n$. Subsection 4.1 covers a characterization of labels of vertices with full degree and a formula that computes the degrees of any vertex. Subsection 4.2 and 4.3 encompass some necessary and sufficient conditions for equality of degrees.

For definitions and terminologies of graph theory, we follow F. Harary \cite{Harary} and A. Bickle \cite{Bickle}. Also, we follow D. Burton \cite{Burton} and K.H. Rosen \cite{Rosen} for basic definitions and notations in number theory.

\begin{theorem}\cite{Burton}, \cite{Rosen}\label{FTA} (Fundamental Theorem of Arithmetic)\\
  Every positive integer $n>1$ can be expressed as a product of primes; this representation is unique, apart from the order in which the factors occur.
\end{theorem}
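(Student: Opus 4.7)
The plan is to split the statement into two parts, existence and uniqueness, and handle each by induction, with Euclid's lemma as the pivotal number-theoretic input.

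For existence, I would proceed by strong induction on $n$. The base case $n=2$ is immediate since $2$ is prime. For the inductive step, given $n>2$, either $n$ itself is prime (in which case the trivial product $n=n$ suffices), or $n$ is composite and admits a factorization $n=ab$ with $1<a,b<n$. Applying the induction hypothesis to $a$ and $b$ yields prime factorizations of each, whose concatenation gives a prime factorization of $n$. This part is essentially bookkeeping and should not present any real difficulty.

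For uniqueness, the crucial ingredient is Euclid's lemma: if a prime $p$ divides a product $ab$, then $p\mid a$ or $p\mid b$. I would first establish Euclid's lemma from B\'ezout's identity, namely that $\gcd(p,a)=1$ whenever $p\nmid a$, so there exist integers $x,y$ with $px+ay=1$; multiplying by $b$ gives $b=pbx+aby$, and since $p\mid ab$, it follows that $p\mid b$. An easy iteration extends this to products of arbitrarily many factors. With Euclid's lemma in hand, suppose $n=p_1p_2\cdots p_r=q_1q_2\cdots q_s$ with the $p_i$ and $q_j$ prime and $r\le s$. Then $p_1\mid q_1\cdots q_s$ forces $p_1=q_j$ for some $j$; after cancelling these equal primes, induction on the total number of prime factors reduces the problem to a strictly shorter identity, eventually forcing $r=s$ and a permutation matching the two lists.

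The genuine obstacle in this outline is Euclid's lemma, since it relies on B\'ezout's identity and hence on the Euclidean algorithm for $\mathbb{Z}$; everything else in the proof is a routine induction. Once Euclid's lemma is available, the cancellation argument for uniqueness is mechanical. Since the statement is attributed to standard references \cite{Burton}, \cite{Rosen}, I would in practice cite these for the underlying divisibility machinery rather than rebuild it from scratch.
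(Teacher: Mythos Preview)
Your outline is the standard and correct proof of the Fundamental Theorem of Arithmetic: strong induction for existence, and Euclid's lemma (derived from B\'ezout) followed by a cancellation argument for uniqueness. There is nothing to object to mathematically.

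However, there is nothing to compare against: the paper does not prove this theorem. It is stated with attribution to the textbooks \cite{Burton} and \cite{Rosen} and is invoked only as a background fact (used later, for instance, in the proof of Theorem~\ref{thm eq-degree2} to decompose labels into prime-power factors). So your proposal supplies a proof where the paper intentionally gives none, and your closing remark that one would in practice simply cite the standard references is exactly what the authors do.
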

\begin{definition}\cite{Bickle}
  Two graphs $G_1, G_2$ are said to be isomorphic, denoted by $G_1\cong G_2$, if there exists a bijective map $\varphi: V(G_1)\rightarrow V(G_2)$ such that for all $u,v\in V(G_1)$,
  $$uv\in E(G_1)\quad \mbox{if and only if}\quad \varphi(u)\varphi(v)\in E(G_2).$$
\end{definition}
\section{Definitions and Preliminaries}
In the beginning of this section, we present the definitions of the prime graphs and the Diophantine graphs and we explore the relation between them. Additionally, we provide direct observations on several number-theoretic lemmas.

\begin{definition}\cite{Seoud1}, \cite{tout}
  A simple graph $G$ of order $n$ is called a prime graph if there exists a bijective mapping $f: V\rightarrow \{1, 2, \dots, n\}$ such that $(f(u),f(v))=1$ for all $uv\in E$. The function $f$ is called a prime labelling of $G$. A prime graph of order $n$ is called a maximal prime graph, denoted by $(R_n,f)$, if any new edge added to this graph turns it into a non-prime graph.
\end{definition}

\begin{definition}
  A simple graph $G$ of order $n$ is called a linear Diophantine graph (or simply, a Diophantine graph) if there exists a bijective mapping $f:V\rightarrow \{1, 2, \dots, n\}$ such that  $(f(u),f(v))\mid n$ for all $uv\in E$. The function $f$ is called a Diophantine labelling of $G$. A Diophantine graph of order $n$ is called a maximal Diophantine graph, denoted by $(D_n,f)$, if any new edge added to this graph turns it into a non-Diophantine graph. If there is no ambiguity, we omit $f$ from $(D_n,f)$ and simply write $D_n$.
\end{definition}

\begin{example}
The Peterson graph $P$ is a Diophantine graph, but the complete graph $K_5$ is not.
\end{example}
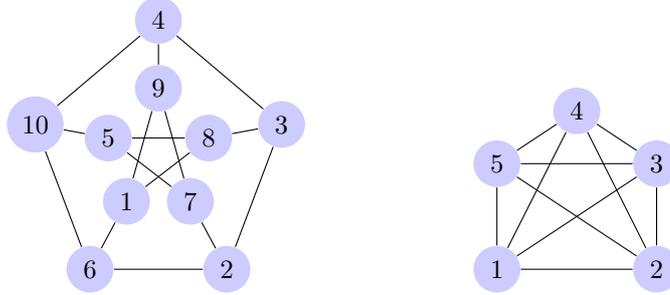
\begin{figure*}[h!]

\centering
\begin{subfigure}{0.3\textwidth}
 \centering
\begin{tikzpicture}
  [scale=0.6,auto=center,every node/.style={circle,fill=blue!20}]
  \node (v1)  at (1,0)         {$6$};
  \node (v2)  at (4,0)         {$2$};
  \node (v3)  at (5.2,3.2)     {$3$};
  \node (v4)  at (2.5,5.5)     {$4$};
  \node (v5)  at (-0.2,3.2)    {$10$};
  \node (v6)  at (1.8,1.5)     {$1$};
  \node (v7)  at (3.2,1.5)     {$7$};
  \node (v8)  at (3.6,2.9)     {$8$};
  \node (v9)  at (2.5,4)       {$9$};
  \node (v10) at (1.4,2.9)     {$5$};
  \draw (v1) -- (v2);
  \draw (v1) -- (v6);
  \draw (v1) -- (v5);

  \draw (v2) -- (v7);
  \draw (v2) -- (v3);

  \draw (v3) -- (v8);
  \draw (v3) -- (v4);

  \draw (v4) -- (v9);
  \draw (v4) -- (v5);

  \draw (v5) -- (v10);

  \draw (v6) -- (v8);
  \draw (v6) -- (v9);

  \draw (v7) -- (v9);
  \draw (v7) -- (v10);

  \draw (v8) -- (v10);
\end{tikzpicture}
\end{subfigure}
~
\begin{subfigure}{0.3\textwidth}
\centering
\begin{tikzpicture}
  [scale=0.6,auto=center,every node/.style={circle,fill=blue!20}]
  \node (v1) at (0,0)    {$1$};
  \node (v2) at (3,0)    {$2$};
  \node (v3) at (3,2)    {$3$};
  \node (v4) at (1.5,3)  {$4$};
  \node (v5) at (0,2)    {$5$};
  \draw (v1) -- (v2);
  \draw (v1) -- (v3);
  \draw (v1) -- (v4);
  \draw (v1) -- (v5);

  \draw (v2) -- (v3);
  \draw (v2) -- (v4);
  \draw (v2) -- (v5);

  \draw (v3) -- (v4);
  \draw (v3) -- (v5);

  \draw (v4) -- (v5);
 \end{tikzpicture}
\end{subfigure}
\captionof{figure}{$P$ is a Diophantine graph while $K_5$ is not}\label{figure0}
\end{figure*}

\begin{example}
The following are some maximal Diophantine graphs.
\end{example}
\begin{figure*}[h!]

\centering
\begin{subfigure}{0.13\textwidth}
 \centering
\begin{tikzpicture}
  [scale=0.6,auto=center,every node/.style={circle,fill=blue!20}]
  \node (v1) at (0,0)    {$1$};
  \node (v2) at (3,0)    {$2$};
  \node (v3) at (3,3)    {$3$};
  \node (v4) at (0,3)    {$4$};
\draw (v1) -- (v2);
\draw (v1) -- (v3);
\draw (v1) -- (v4);
\draw (v2) -- (v3);
\draw (v2) -- (v4);
\draw (v3) -- (v4);
\end{tikzpicture}
\end{subfigure}
~
\begin{subfigure}{0.13\textwidth}
\begin{tikzpicture}
  [scale=0.6,auto=center,every node/.style={circle,fill=blue!20}]
  \node (v1)                     at (0,0)      {$1$};
  \node (v2)[circle,fill=red!20] at (3,0)      {$2$};
  \node (v3)                     at (3,2.5)    {$3$};
  \node (v4)[circle,fill=red!20] at (1.5,4)    {$4$};
  \node (v5)                     at (0,2.5)    {$5$};
  \draw (v1) -- (v2);
  \draw (v1) -- (v3);
  \draw (v1) -- (v4);
  \draw (v1) -- (v5);

  \draw (v2) -- (v3);
  \draw (v2) -- (v5);

  \draw (v3) -- (v4);
  \draw (v3) -- (v5);

  \draw (v4) -- (v5);
\end{tikzpicture}
\end{subfigure}
~
\begin{subfigure}{0.16\textwidth}
 \centering
\begin{tikzpicture}
  [scale=0.6,auto=center,every node/.style={circle,fill=blue!20}]
  \node (v1) at (0,0)      {$1$};
  \node (v2) at (1.7,0)    {$2$};
  \node (v3) at (2.7,2)    {$3$};
  \node (v4) at (1.7,4)    {$4$};
  \node (v5) at (0,4)      {$5$};
  \node (v6) at (-1,2)     {$6$};
  \draw (v1) -- (v2);
  \draw (v1) -- (v3);
  \draw (v1) -- (v4);
  \draw (v1) -- (v5);
  \draw (v1) -- (v6);

  \draw (v2) -- (v3);
  \draw (v2) -- (v4);
  \draw (v2) -- (v5);
  \draw (v2) -- (v6);

  \draw (v3) -- (v4);
  \draw (v3) -- (v5);
  \draw (v3) -- (v6);

  \draw (v4) -- (v5);
  \draw (v4) -- (v6);

  \draw (v5) -- (v6);
\end{tikzpicture}
\end{subfigure}
~
 \begin{subfigure}{0.18\textwidth}
 \centering
   \begin{tikzpicture}
   [scale=0.6,auto=center,every node/.style={circle,fill=blue!20}]
  \node (v1) at (1,6)   {$1$};
  \node (v5) at (-1,6)  {$5$};
  \node (v7) at (-2,4)  {$7$};
  \node (v3) at (2,4)   {$3$};

  \node (v2)[circle,fill=red!20] at (-1.5,2)  {$2$};
  \node (v6)[circle,fill=red!20] at (0,1.5)   {$6$};
  \node (v4)[circle,fill=red!20] at (1.5,2)   {$4$};
  \draw (v1) -- (v2);
  \draw (v1) -- (v3);
  \draw (v1) -- (v4);
  \draw (v1) -- (v5);
  \draw (v1) -- (v6);
  \draw (v1) -- (v7);

  \draw (v5) -- (v2);
  \draw (v5) -- (v3);
  \draw (v5) -- (v4);
  \draw (v5) -- (v6);
  \draw (v5) -- (v7);

  \draw (v7) -- (v2);
  \draw (v7) -- (v3);
  \draw (v7) -- (v4);
  \draw (v7) -- (v6);

  \draw (v3) -- (v2);
  \draw (v3) -- (v4);
   \end{tikzpicture}
  \end{subfigure}
  ~
  \begin{subfigure}{0.2\textwidth}
 \centering
\begin{tikzpicture}
  [scale=0.5,auto=center,every node/.style={circle,fill=blue!20}]
  \node                     (v1) at (0,0)     {$1$};
  \node                     (v2) at (2,0)     {$2$};
  \node[circle,fill=red!20] (v3) at (3.5,2)   {$3$};
  \node                     (v4) at (3.5,4)   {$4$};
  \node                     (v5) at (0,6)     {$5$};
  \node[circle,fill=red!20] (v6) at (2,6)     {$6$};
  \node                     (v7) at (-1.5,4)  {$7$};
  \node                     (v8) at (-1.5,2)  {$8$};
  \draw (v1) -- (v2);
  \draw (v1) -- (v3);
  \draw (v1) -- (v4);
  \draw (v1) -- (v5);
  \draw (v1) -- (v6);
  \draw (v1) -- (v7);
  \draw (v1) -- (v8);

  \draw (v2) -- (v3);
  \draw (v2) -- (v4);
  \draw (v2) -- (v5);
  \draw (v2) -- (v6);
  \draw (v2) -- (v7);
  \draw (v2) -- (v8);

  \draw (v3) -- (v4);
  \draw (v3) -- (v5);
  \draw (v3) -- (v7);
  \draw (v3) -- (v8);

  \draw (v4) -- (v5);
  \draw (v4) -- (v6);
  \draw (v4) -- (v7);
  \draw (v4) -- (v8);

  \draw (v5) -- (v6);
  \draw (v5) -- (v7);
  \draw (v5) -- (v8);

  \draw (v6) -- (v7);
  \draw (v6) -- (v8);

  \draw (v7) -- (v8);
\end{tikzpicture}
 \end{subfigure}
\captionof{figure}{Maximal Diophantine Graphs $D_4,\dots, D_{8}$}
\end{figure*}
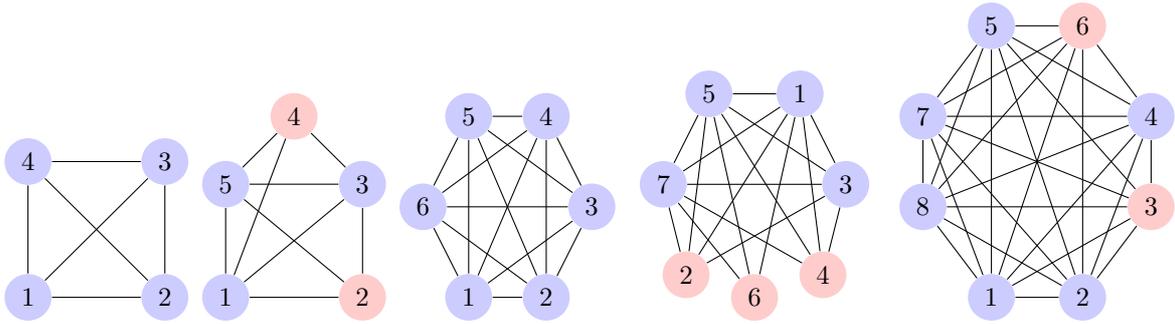
It is easy to see that every prime graph is a Diophantine graph, but the converse is not necessarily true, e.g., $D_4,D_6,D_8$.

%

\begin{theorem}\label{prime-dioph1}
  Let $n>1$ be a prime number. A graph $G$ of order $n$ is a Diophantine graph if and only if $G$ is a prime graph.
\end{theorem}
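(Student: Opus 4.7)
The plan is to prove both directions of the biconditional, with the forward implication being essentially immediate from the definitions and the reverse implication requiring a short number-theoretic argument exploiting the primality of $n$.

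For the forward direction, I would simply observe that any prime labelling $f$ of $G$ satisfies $(f(u),f(v))=1$ on every edge, and since $1$ divides $n$, this is automatically a Diophantine labelling. This is already noted informally in the preceding discussion.

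For the reverse direction, I would start with a Diophantine labelling $f: V \to \{1,2,\ldots,n\}$ and use the assumption that $n$ is prime. For any edge $uv \in E$, the condition $(f(u),f(v)) \mid n$ combined with the primality of $n$ forces $(f(u),f(v)) \in \{1,n\}$. The key step is to rule out the value $n$: if $(f(u),f(v)) = n$, then $n \mid f(u)$ and $n \mid f(v)$, but the only multiple of $n$ in the range $\{1,2,\ldots,n\}$ is $n$ itself, contradicting the injectivity of $f$. Hence $(f(u),f(v)) = 1$ on every edge, so $f$ is a prime labelling of $G$.

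I do not expect any real obstacle here; the statement is a direct consequence of the definitions once one writes down the divisors of a prime. The only subtlety worth flagging explicitly is that the same labelling $f$ witnesses both properties, so no relabelling is required.
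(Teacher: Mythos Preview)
Your proof is correct and is exactly the natural elaboration of what the paper means by ``The proof is clear.'' One cosmetic remark: you have the labels ``forward'' and ``reverse'' swapped relative to the statement as written (Diophantine $\Rightarrow$ prime is the forward direction here), but the mathematics of both implications is handled correctly.
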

\begin{proof}
  The proof is clear.
\end{proof}

\begin{theorem}\label{prime-dioph2}
 For every $n>1$, $D_n\cong R_n$ if and only if $n$ is a prime number.
\end{theorem}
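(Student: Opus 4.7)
My plan is to prove each direction separately, exploiting the fact that every prime labeling is automatically a Diophantine labeling (since $1 \mid n$ always holds). Consequently, under the natural labeling $f(i) = i$, we always have $E(R_n) \subseteq E(D_n)$, and the biconditional reduces to deciding when this inclusion is an equality.

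For the \emph{if} direction, suppose $n$ is prime. Theorem \ref{prime-dioph1} tells us that Diophantine graphs and prime graphs of order $n$ coincide as classes, so maximal Diophantine graphs are exactly maximal prime graphs, yielding $D_n \cong R_n$ at once.

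For the \emph{only if} direction, I would argue by contrapositive: assume $n$ is composite and show $|E(D_n)| > |E(R_n)|$, which rules out $D_n \cong R_n$ since the edge count is an isomorphism invariant. Pick any prime divisor $p$ of $n$; compositeness forces $1 < p < n$. Under the natural labeling, $(p, n) = p > 1$, so the edge joining the vertices labeled $p$ and $n$ is absent from $R_n$ (it violates the coprimality condition required by the prime labeling); however $p \mid n$ places this edge in $D_n$. Hence $E(R_n) \subsetneq E(D_n)$, as required.

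The main subtlety is purely bookkeeping: one must confirm that $D_n$ and $R_n$ are each well defined up to isomorphism so that their edge counts are meaningful invariants. This follows because the adjacency rules $(f(u),f(v)) = 1$ and $(f(u),f(v)) \mid n$ are symmetric in the labels, so different choices of labeling $f$ produce isomorphic maximal graphs. Beyond this, only the existence of a proper prime divisor of composite $n$ is used, which is immediate from the Fundamental Theorem of Arithmetic.
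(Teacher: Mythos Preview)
Your proof is correct and follows essentially the same strategy as the paper: show that when $n$ is composite there is an edge in $D_n$ absent from $R_n$, and when $n$ is prime the two edge sets coincide. The only minor differences are that you invoke Theorem~\ref{prime-dioph1} for the backward direction (the paper reproves it inline) and that your witness edge is $\{p,n\}$ rather than the paper's $\{p,2p\}$; if anything, your contrapositive via edge counts is slightly more careful about why isomorphism fails than the paper's bare assertion that $D_n\cong R_n$ forces $E(R_n)=E(D_n)$.
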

\begin{proof}
Let $K_n$ be the complete graph of order $n$ and $f: V(K_n)\rightarrow \{1, 2, \dots, n\}$ be a bijective map. Define two sets
$$E(R_n):=\{uv\in E(K_n):(f(u),f(v))=1\}$$
and
$$E(D_n):=\{uv\in E(K_n):(f(u),f(v))\mid n\}.$$
Let $D_n\cong R_n$. Then $E(R_n)=E(D_n)$. Therefore,
$$(f(u),f(v))\mid n \quad\mbox{if and only if}\quad (f(u),f(v))=1.$$
Suppose, by the way of contradiction, $n>1$ is a composite number. Then there exists a prime number $p\mid n$ such that $p \leq\frac{n}{2}$. Let $f(u)=p\leq n$ and $f(v)=2p\leq n$. Then $(f(u),f(v))=p\mid n$ and $(f(u),f(v))\neq1$ which contradicts $(f(u),f(v))=1$. Hence, $n$ is a prime number.

Conversely, let $n$ be a prime number and let $uv\in E(R_n)$. Then $(f(u),f(v))=1$. Consequently, $(f(u),f(v))\mid n$. Therefore, $uv\in E(D_n)$. On the other hand, let $uv\in E(D_n)$. Then $(f(u),f(v))\mid n$. Since $n$ is a prime number, we have either $(f(u),f(v))=1$ or $(f(u),f(v))=n$. In the case that $(f(u),f(v))=1$, we have nothing to proof. In the other case that $(f(u),f(v))=n,$ we get $f(u)=f(v)=n$ which is a contradiction with the assumption. Consequently, $(f(u),f(v))=1$. Therefore, $uv\in E(R_n)$. Hence, $D_n\cong R_n$.
\end{proof}

In the subsequent sections, the $p$-adic valuation of $n\in\mathbb{Z}^+$ is defined by $v_p(n):=t$ such that $p^t\mid n$ and $p^{t+1}\nmid n$, where $p$ is a prime number and $t\in\mathbb{N}:=\mathbb{Z}^+\cup \{0\}$ and $\mathbb{Z}^+$ is the set of positive integers. The set of all multiple of $a\in\mathbb{Z}^+$ up to $n$ is denoted by $M_a$, i.e. $M_a:=\left\{ka \ : \ k=1,2,\dots,\left\lfloor\frac{n}{a}\right\rfloor\right\}.$ In any graph $G$ of order $n$, we call $f^*(u):=\frac{f(u)}{(f(u), n)}$ the reduced label of a vertex $u$ and the neighborhood of $s\in V(G)$ is defined by $N(s):=\{u\in V(G) :su\in E(G)\}$. Now, we present a crucial definition and important number-theoretic lemmas that will allow us to formulate and prove our results smoothly.

\begin{definition}
  A number $p^{\acute{v}_p(n)}$ is called a critical prime power number with respect to a prime number $p$ and a positive integer $n$, where $\acute{v}_p(n):=v_p(n)+1$ is called the successor of the $p$-adic valuation.
\end{definition}

\begin{lemma}\label{rem1} Let $u,v$ be two vertices in $D_n$ and $f$ be a Diophantine labelling of $D_n$. Then
\begin{enumerate}
    \item[i.] for every prime number $p$, $p\mid f^*(u)$ if and only if $p^{\acute{v}_p(n)}\mid f(u);$
    \item[ii.] if $f(u)\mid f(v)$, then $f^*(u)\mid f^*(v).$
\end{enumerate}
\end{lemma}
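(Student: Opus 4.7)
The plan is to reduce both parts to simple identities on $p$-adic valuations applied to the definition $f^*(u) = f(u)/(f(u),n)$. The key observation is that for any prime $p$ and any positive integer $m$, $v_p((m,n)) = \min(v_p(m), v_p(n))$, so
\[
 v_p(f^*(u)) \;=\; v_p(f(u)) - \min(v_p(f(u)), v_p(n)) \;=\; \max\bigl(v_p(f(u)) - v_p(n),\,0\bigr).
\]
Once this formula is recorded, (i) and (ii) both drop out almost mechanically.

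For part (i), I would chain the equivalences $p \mid f^*(u) \Longleftrightarrow v_p(f^*(u)) \geq 1 \Longleftrightarrow v_p(f(u)) - v_p(n) \geq 1 \Longleftrightarrow v_p(f(u)) \geq v_p(n)+1 = \acute v_p(n) \Longleftrightarrow p^{\acute v_p(n)} \mid f(u),$ where the middle step uses the boxed formula above together with the fact that the $\max$ is at least $1$ exactly when its first argument is. The only mild subtlety is making sure the case $v_p(f(u)) \leq v_p(n)$ is handled, but the monotone expression $\max(\cdot,0)$ covers it automatically.

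For part (ii), I would argue primewise: $f(u) \mid f(v)$ means $v_p(f(u)) \leq v_p(f(v))$ for every prime $p$. Subtracting $v_p(n)$ preserves the inequality, and then applying $\max(\cdot,0)$ also preserves it because $x \mapsto \max(x,0)$ is nondecreasing. Hence $v_p(f^*(u)) \leq v_p(f^*(v))$ for every prime $p$, which by the Fundamental Theorem of Arithmetic (Theorem~\ref{FTA}) is equivalent to $f^*(u) \mid f^*(v)$.

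Honestly, there is no real obstacle here; this lemma is a packaging of standard $p$-adic valuation arithmetic into the language of reduced labels. The only thing worth being careful about is the formal justification that $v_p((m,n)) = \min(v_p(m),v_p(n))$, which follows immediately from the prime factorization characterisation of $\gcd$; since the paper invokes Theorem~\ref{FTA} right above, it is legitimate to use it without further comment.
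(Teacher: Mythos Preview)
Your proof is correct and follows the same $p$-adic valuation approach as the paper. The only real difference is in part~(ii): the paper computes $v_p\!\left(\frac{f^*(v)}{f^*(u)}\right)$ directly and verifies nonnegativity by a three-way case split on the relative order of $v_p(f(u))$, $v_p(n)$, and $v_p(f(v))$, whereas you first isolate the closed form $v_p(f^*(u)) = \max\bigl(v_p(f(u)) - v_p(n),\,0\bigr)$ and then invoke the monotonicity of $x \mapsto \max(x,0)$ to avoid any case analysis. Your packaging is slightly cleaner and reusable (you use the same formula for both parts), but the underlying content is identical.
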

\begin{proof}
The proof of part $i.$ is straightforward. Let us prove the second part. The $p$-adic valuation function \cite{Bachman} is extended normally to rational numbers as follows
  \begin{equation*}
      v_p\left(\frac{b}{a}\right):=v_p(b)-v_p(a),
  \end{equation*}
for any $a,b\in\mathbb{Z}^+$ and for every prime number $p$. Let $u,v$ be two vertices in $V(D_n)$ such that $f(u)\mid f(v).$ Using the basic properties of the $p$-adic valuation function, we have
 \begin{equation*}
    \begin{split}
       v_p\left(\frac{f(v)}{f(u)}\frac{(f(u), n)}{(f(v), n)}\right) & =v_p(f(v))-v_p(f(u))+v_p\big((f(u), n)\big)-v_p\big((f(v), n)\big). \\
                                                                    & =v_p(f(v))-v_p(f(u))+\min\big\{v_p(f(u)), v_p(n)\big\}-\min\big\{v_p(f(v)), v_p(n)\big\}.
    \end{split}
 \end{equation*}
Therefore, one can easily check that
$$v_p\left(\frac{f(v)}{f(u)}\frac{(f(u), n)}{(f(v), n)}\right)\geq0$$
in all possible cases between $v_p(f(u)),v_p(n)$ and $v_p(f(v))$, namely
$$v_p(n)<v_p(f(u)), \ v_p(f(u))\leq v_p(n)\leq v_p(f(v)) \ \mbox{and} \ v_p(f(v))<v_p(n).$$
Therefore,
$$\frac{f(u)}{(f(u), n)} \ \Bigg| \ \frac{f(v)}{(f(v), n)}.$$
Hence, the proof follows.
\end{proof}

We will use the following identity:
$\frac{1}{j}-\frac{1}{j+1}=\left(1+\frac{2}{j}\right)\left[\frac{1}{j+1}-\frac{1}{j+2}\right]$, $j\in\mathbb{Z}^+$
for the proof of Lemma \ref{lem1.13}.
\begin{lemma}\label{lem1.13}
  For every $n\in\mathbb{Z}^+$, there exists $j\in \left\{ \lfloor\sqrt{n}\rfloor,  \lfloor\sqrt{n}\rfloor+1, \ldots, n \right\}$ such that
 \begin{equation*}
   \frac{n}{j+1} < \lfloor\sqrt{n}\rfloor \leq \frac{n}{j} \quad \mbox{and} \quad \frac{n}{j}-\frac{n}{j+1}<1.
 \end{equation*}
\end{lemma}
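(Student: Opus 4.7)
My plan is to make the choice of $j$ completely explicit and then verify the three requirements by short direct calculations, without invoking the hinted identity. Set $m := \lfloor\sqrt{n}\rfloor$ and define $j := \lfloor n/m \rfloor$; the whole lemma should reduce to checking membership in the range, the two-sided bracket, and the gap condition for this single candidate.

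For membership, the defining inequality $m^2 \leq n$ yields $n/m \geq m$, so $j \geq m$, while $j \leq n$ is immediate from $m \geq 1$. The two-sided bracket $\frac{n}{j+1} < m \leq \frac{n}{j}$ would then follow straight from the definition of the floor: $j \leq n/m < j+1$ rearranges to $mj \leq n < m(j+1)$, which is literally the required pair of inequalities.

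The only part that looks like it could bite is the gap condition, which I would rewrite as $n < j(j+1)$. Chaining $n < m(j+1)$, already obtained in the previous step, with $m \leq j$, also obtained above, gives $n < m(j+1) \leq j(j+1)$, and this is exactly what is needed. The step I initially expected to be the main obstacle, namely a case analysis on $n \bmod m$ or an iterative argument using the identity $\frac{n}{j} - \frac{n}{j+1} = \bigl(1+\tfrac{2}{j}\bigr)\bigl[\frac{n}{j+1} - \frac{n}{j+2}\bigr]$ starting at $j = m$ and incrementing until the gap falls below $1$, is bypassed by the choice $j = \lfloor n/m \rfloor$: both halves of the conclusion now fall out of the same short chain of inequalities $mj \leq n < m(j+1) \leq j(j+1)$.
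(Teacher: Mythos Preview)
Your proof is correct and more direct than the paper's. Setting $m=\lfloor\sqrt{n}\rfloor$ and $j=\lfloor n/m\rfloor$, the chain $mj\le n<m(j+1)\le j(j+1)$ indeed delivers all three required properties at once: membership $m\le j\le n$, the bracket $\frac{n}{j+1}<m\le\frac{n}{j}$, and the gap bound $\frac{n}{j}-\frac{n}{j+1}=\frac{n}{j(j+1)}<1$.

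The paper takes a different route. It first distinguishes the cases $n<m(m+1)$ and $n\ge m(m+1)$; in the former it observes that $j=m$ already works, and in the latter it argues existence via the monotonicity identity $\frac{n}{j}-\frac{n}{j+1}=\bigl(1+\tfrac{2}{j}\bigr)\bigl[\frac{n}{j+1}-\frac{n}{j+2}\bigr]$, which shows the gap strictly decreases in $j$, so sliding $j$ upward from $m$ one eventually lands on an interval $\bigl(\tfrac{n}{j+1},\tfrac{n}{j}\bigr]$ of length below $1$ that still contains $m$. Your explicit choice $j=\lfloor n/m\rfloor$ bypasses both the case split and the iterative monotonicity step, and in fact identifies the \emph{smallest} $j$ with $m\le n/j$; the paper's argument, by contrast, does not name the witness but only guarantees one exists. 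Your version is shorter and yields a concrete value of $j$, which could be useful downstream; the paper's version highlights the structural reason (decreasing gaps) that such a $j$ must occur.
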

\begin{proof}
Clearly, we have
\begin{equation*}
    \begin{cases}
\lfloor\sqrt{n}\rfloor \in \left(\frac{n}{\lfloor\sqrt{n}\rfloor +1}, \frac{n}{\lfloor\sqrt{n}\rfloor}\right] \quad \mbox{and}\quad \frac{n}{\lfloor\sqrt{n}\rfloor}-\frac{n}{\lfloor\sqrt{n}\rfloor +1} <1, & \quad \mbox{if} \quad n < \lfloor\sqrt{n}\rfloor(\lfloor\sqrt{n}\rfloor+1); \\
&\\
 \lfloor\sqrt{n}\rfloor \notin \left(\frac{n}{\lfloor\sqrt{n}\rfloor+1}, \frac{n}{\lfloor\sqrt{n}\rfloor }\right] \quad\mbox{and}\quad \frac{n}{\lfloor\sqrt{n}\rfloor}-\frac{n}{\lfloor\sqrt{n}\rfloor +1}\geq1, & \quad \mbox{if} \quad n \geq \lfloor\sqrt{n}\rfloor(\lfloor\sqrt{n}\rfloor+1).\\
\end{cases}
\end{equation*}
Since $\lfloor\sqrt{n}\rfloor <\frac{n}{\lfloor\sqrt{n}\rfloor}$ and $\frac{n}{j}-\frac{n}{j+1}>\frac{n}{j+1}-\frac{n}{j+2},$ therefore there exists $j\geq \lfloor\sqrt{n}\rfloor$ such that
\begin{equation*}
  \lfloor\sqrt{n}\rfloor \in\left(\frac{n}{j+2}, \frac{n}{j+1}\right] \quad \mbox{and} \quad \frac{n}{j+1}-\frac{n}{j+2}\leq\frac{n}{\lfloor\sqrt{n}\rfloor+1}-\frac{n}{\lfloor\sqrt{n}\rfloor+2}<1,
\end{equation*}
which complete the proof.
\end{proof}

\begin{lemma}\label{lem1.14}
 Let $a, b$ and $n\in \mathbb{Z}^+$. If $\lfloor\frac{n}{a}\rfloor=\lfloor\frac{n}{b}\rfloor$ and $a\neq b$, then $ab>n$.
\end{lemma}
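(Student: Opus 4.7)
The plan is to argue by contradiction. Assume without loss of generality that $a<b$, set $q:=\lfloor n/a\rfloor=\lfloor n/b\rfloor$, and suppose for contradiction that $ab\le n$. I expect the contradiction to arise from forcing $q$ to be both at least $a$ and strictly less than $a$.

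First I would extract the ``$a\le q$'' half. From $ab\le n$ we get $a\le n/b$, and since $a$ is a positive integer this means $a\le\lfloor n/b\rfloor=q$. That is the easy direction and uses only one of the floor hypotheses.

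Next I would use the separation $b\ge a+1$ (from $a<b$) together with the two defining inequalities of the floor function to get the opposite estimate. Specifically, $\lfloor n/b\rfloor=q$ gives $n\ge qb\ge q(a+1)=qa+q$, while $\lfloor n/a\rfloor=q$ gives $n<(q+1)a=qa+a$. Chaining these yields
\begin{equation*}
qa+q\;\le\;n\;<\;qa+a,
\end{equation*}
hence $q<a$, which contradicts the bound $a\le q$ from the previous step. Therefore $ab>n$.

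I do not anticipate a serious obstacle here; the only subtlety is remembering to combine the lower bound coming from the \emph{larger} denominator $b$ with the upper bound coming from the \emph{smaller} denominator $a$, so that the gap $b-a\ge 1$ actually feeds into the estimate. The WLOG reduction is harmless because the hypothesis and conclusion are symmetric in $a$ and $b$.
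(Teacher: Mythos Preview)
Your argument is correct. The two floor equalities give $qb\le n<(q+1)a$, and with $b\ge a+1$ this forces $q<a$; meanwhile the assumption $ab\le n$ gives $a\le n/b$ and hence $a\le\lfloor n/b\rfloor=q$, a contradiction. The edge cases (e.g.\ $q=0$) are handled automatically, and the WLOG reduction is indeed harmless by symmetry.

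Your route is genuinely different from the paper's and considerably more direct. The paper first disposes of $a\ge n$ or $b\ge n$ by inspection, then for $a,b<n$ observes that both lie in a common interval $\bigl(\tfrac{n}{j+1},\tfrac{n}{j}\bigr]$ of length exceeding~$1$ (since $a\ne b$), and invokes the preceding Lemma~\ref{lem1.13} to locate an index $i\ge\lfloor\sqrt{n}\rfloor$ where the corresponding interval has length below~$1$; monotonicity of the gap $\tfrac{n}{j}-\tfrac{n}{j+1}$ then forces $a,b>\lfloor\sqrt{n}\rfloor$, whence $ab>n$. That argument is structurally heavier---it needs an auxiliary lemma and a monotonicity step---whereas yours extracts the contradiction straight from the defining inequalities of the floor function and the single integer gap $b-a\ge 1$. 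The paper's approach does make the geometric picture (both numbers exceed $\sqrt{n}$) explicit, which is conceptually nice, but your proof is self-contained and would let one drop the dependence on Lemma~\ref{lem1.13} entirely for this result.
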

\begin{proof}
Clearly, the statement is true in case of positive integers $a$ or $b$ are greater than or equal to $n\in \mathbb{Z}^+$. Now, let $a,b<n$ such that $\lfloor\frac{n}{a}\rfloor=\lfloor\frac{n}{b}\rfloor$. Therefore, there exists $j\in\{1,2, \dots, n\}$ such that $\frac{n}{j+1}<a, b\leq\frac{n}{j}$. Consequently, $\frac{n}{j}-\frac{n}{j+1}> 1$, since $a \neq b$. Using Lemma \ref{lem1.13}, we get that there exists $i\in\left\{\lfloor\sqrt{n}\rfloor,\lfloor\sqrt{n}\rfloor+1, \dots, n\right\}$ such that
\begin{equation*}
 \frac{n}{i+1} < \lfloor\sqrt{n}\rfloor \leq \frac{n}{i} \quad \mbox{and} \quad \frac{n}{i}-\frac{n}{i+1}<1.
\end{equation*}
Thus, $a,b>\lfloor\sqrt{n}\rfloor$. Hence, $ab> n$.
\end{proof}

\begin{lemma}\label{lem1.15}
 Let $a,b,n\in \mathbb{Z}^+$ and let $p_1,p_2,q_1,q_2$ be distinct prime numbers and $\alpha_1,\alpha_2,\beta_1$, $\beta_2$ be positive integers. If $\frac{a}{(a,n)}=p_1^{\alpha_1} p_2^{\alpha_2}$ and $\frac{b}{(b,n)}= q_1^{\beta_1} q_2^{\beta_2}$ such that
\begin{equation}\label{inequality1}
 \frac{n}{i+1} < p_1^{\acute{v}_{p_1}(n)}, q_1^{\acute{v}_{q_1}(n)}\leq \frac{n}{i} \quad  \mbox{and} \quad  \frac{n}{j+1} < p_2^{\acute{v}_{p_2}(n)}, q_2^{\acute{v}_{q_2}(n)} \leq \frac{n}{j}
\end{equation}
for some $i, j= 1, 2, \dots, n$, then
\begin{equation*}
    a>n \quad \mbox{or} \quad b>n.
\end{equation*}
\end{lemma}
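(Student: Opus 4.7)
The plan is to extract from the hypotheses a lower bound on each of $a$ and $b$ in terms of two distinct critical prime powers, and then to invoke Lemma~\ref{lem1.14} twice in order to force $ab > n^2$.

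First, I would upgrade Lemma~\ref{rem1}(i) to a purely number-theoretic statement: for any $c,n \in \mathbb{Z}^+$ and any prime $p$,
\begin{equation*}
    p \mid \tfrac{c}{(c,n)} \Longleftrightarrow p^{\acute{v}_p(n)} \mid c.
\end{equation*}
This is immediate from $v_p(c/(c,n)) = \max\{0, v_p(c) - v_p(n)\}$; although Lemma~\ref{rem1} is phrased for vertex labels, its argument is strictly $p$-adic and applies to any positive integer. Applying this to $c=a$ with the primes $p_1$ and $p_2$ (both of which divide $a/(a,n) = p_1^{\alpha_1} p_2^{\alpha_2}$) and using $p_1 \neq p_2$, I would conclude
\begin{equation*}
    p_1^{\acute{v}_{p_1}(n)} p_2^{\acute{v}_{p_2}(n)} \mid a, \qquad q_1^{\acute{v}_{q_1}(n)} q_2^{\acute{v}_{q_2}(n)} \mid b,
\end{equation*}
so in particular $a \geq p_1^{\acute{v}_{p_1}(n)} p_2^{\acute{v}_{p_2}(n)}$ and $b \geq q_1^{\acute{v}_{q_1}(n)} q_2^{\acute{v}_{q_2}(n)}$.

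Next, I would translate (\ref{inequality1}) into the floor-based form required by Lemma~\ref{lem1.14}: the condition $n/(i+1) < x \leq n/i$ is equivalent to $\lfloor n/x \rfloor = i$, so (\ref{inequality1}) gives
\begin{equation*}
    \left\lfloor \tfrac{n}{p_1^{\acute{v}_{p_1}(n)}} \right\rfloor = \left\lfloor \tfrac{n}{q_1^{\acute{v}_{q_1}(n)}} \right\rfloor, \qquad \left\lfloor \tfrac{n}{p_2^{\acute{v}_{p_2}(n)}} \right\rfloor = \left\lfloor \tfrac{n}{q_2^{\acute{v}_{q_2}(n)}} \right\rfloor.
\end{equation*}
Since $p_1, p_2, q_1, q_2$ are pairwise distinct primes, unique factorization (Theorem~\ref{FTA}) forces $p_1^{\acute{v}_{p_1}(n)} \neq q_1^{\acute{v}_{q_1}(n)}$ and $p_2^{\acute{v}_{p_2}(n)} \neq q_2^{\acute{v}_{q_2}(n)}$. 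Two applications of Lemma~\ref{lem1.14} then yield $p_1^{\acute{v}_{p_1}(n)} q_1^{\acute{v}_{q_1}(n)} > n$ and $p_2^{\acute{v}_{p_2}(n)} q_2^{\acute{v}_{q_2}(n)} > n$.

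Combining the two divisibility-based lower bounds with these two Lemma~\ref{lem1.14} inequalities and regrouping gives
\begin{equation*}
    ab \;\geq\; \bigl(p_1^{\acute{v}_{p_1}(n)} q_1^{\acute{v}_{q_1}(n)}\bigr)\bigl(p_2^{\acute{v}_{p_2}(n)} q_2^{\acute{v}_{q_2}(n)}\bigr) \;>\; n^2,
\end{equation*}
whence $a$ and $b$ cannot both be $\leq n$, and the conclusion follows. The only slightly delicate point is the first step, where Lemma~\ref{rem1}(i) must be invoked outside its literal scope of vertex labels; everything after that is a direct application of Lemma~\ref{lem1.14} together with the distinctness of the four primes.
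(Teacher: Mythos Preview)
Your argument is correct. The opening moves---extracting $p_1^{\acute{v}_{p_1}(n)} p_2^{\acute{v}_{p_2}(n)} \mid a$ and $q_1^{\acute{v}_{q_1}(n)} q_2^{\acute{v}_{q_2}(n)} \mid b$ via the $p$-adic computation behind Lemma~\ref{rem1}(i), and rewriting \eqref{inequality1} as two floor equalities---match the paper exactly. Where you diverge is in the endgame: the paper splits into three cases according to whether $i=j$, $i>j$, or $i<j$, and in each case applies Lemma~\ref{lem1.14} to a different pair of critical prime powers so as to conclude $a>n$ or $b>n$ individually. You instead apply Lemma~\ref{lem1.14} once to each of the cross pairs $(p_1^{\acute{v}_{p_1}(n)}, q_1^{\acute{v}_{q_1}(n)})$ and $(p_2^{\acute{v}_{p_2}(n)}, q_2^{\acute{v}_{q_2}(n)})$---which are distinct because the four primes are---and then multiply the two resulting inequalities together to obtain $ab>n^2$, bypassing the case split entirely. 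This is a cleaner and more uniform finish; the paper's case analysis yields the slightly sharper information of \emph{which} of $a$ or $b$ exceeds $n$ (depending on the sign of $i-j$), but that refinement is not needed for the lemma as stated.
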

\begin{proof}
Let $\frac{a}{(a,n)}=p_1^{\alpha_1} p_2^{\alpha_2}$ and $\frac{b}{(b,n)}= q_1^{\beta_1} q_2^{\beta_2}$ such that $p_1,p_2,q_1,q_2$ are distinct prime numbers satisfying equation \eqref{inequality1}, where $\alpha_1,\alpha_2,\beta_1,\beta_2>0$ are positive integers. Consequently,
\begin{equation}\label{equality1}
     \left\lfloor\frac{n}{p_1^{\acute{v}_{p_1}(n)}}\right\rfloor=\left\lfloor\frac{n}{q_1^{\acute{v}_{q_1}(n)}}\right\rfloor
     \quad  \mbox{and} \quad  \left\lfloor\frac{n}{p_2^{\acute{v}_{p_2}(n)}}\right\rfloor=\left\lfloor\frac{n}{q_2^{\acute{v}_{q_2}(n)}}\right\rfloor.
\end{equation}
Since $\frac{a}{(a,n)}=p_1^{\alpha_1} p_2^{\alpha_2}$ and $\frac{b}{(b,n)}=q_1^{\beta_1}q_2^{\beta_2},$ therefore Lemmas \ref{rem1}, part $i.$, implies that
\begin{equation}\label{formula2}
    p_1^{\acute{v}_{p_1}(n)}p_2^{\acute{v}_{p_2}(n)} \ \Big| \ a\quad \mbox{and}\quad q_1^{\acute{v}_{q_1}(n)}q_2^{\acute{v}_{q_2}(n)} \ \Big| \ b.
\end{equation}
In the case $i=j,$ equation \eqref{inequality1} implies that
\begin{equation*}
    \left\lfloor\frac{n}{p_1^{\acute{v}_{p_1}(n)}}\right\rfloor=\left\lfloor\frac{n}{q_1^{\acute{v}_{q_1}(n)}}\right\rfloor
    =\left\lfloor\frac{n}{p_2^{\acute{v}_{p_2}(n)}}\right\rfloor=\left\lfloor\frac{n}{q_2^{\acute{v}_{q_2}(n)}}\right\rfloor.
 \end{equation*}
Therefore, using Lemma \ref{lem1.14} and equation \eqref{formula2}, we get
\begin{equation*}
    n<p_1^{\acute{v}_{p_1}(n)}p_2^{\acute{v}_{p_2}(n)}\leq a \quad \mbox{and}\quad n<q_1^{\acute{v}_{q_1}(n)}q_2^{\acute{v}_{q_2}(n)}\leq b.
\end{equation*}
If $i>j$, then equation  \eqref{formula2} implies that
\begin{equation*}
    p_1^{\acute{v}_{p_1}(n)}<p_2^{\acute{v}_{p_2}(n)}, q_2^{\acute{v}_{q_2}(n)} \quad\mbox{and}\quad q_1^{\acute{v}_{q_1}(n)}<p_2^{\acute{v}_{p_2}(n)}, q_2^{\acute{v}_{q_2}(n)}.
\end{equation*}
Therefore, using Lemma \ref{lem1.14} and  equations \eqref{equality1} and \eqref{formula2}, we get
\begin{equation*}
    n<p_1^{\acute{v}_{p_1}(n)}q_1^{\acute{v}_{q_1}(n)}< p_1^{\acute{v}_{p_1}(n)}p_2^{\acute{v}_{p_2}(n)}\leq a.
\end{equation*}
In a very similar way, one can prove that, if $i<j$, then
$$n<p_2^{\acute{v}_{p_2}(n)}q_2^{\acute{v}_{q_2}(n)}< q_1^{\acute{v}_{q_1}(n)}q_2^{\acute{v}_{q_2}(n)}\leq b.$$
Hence, the proof follows.
\end{proof}

\begin{lemma}\label{lem1.12}
 Let $a,b,t,n\in \mathbb{Z}^+$. If $\left\lfloor\frac{n}{a}\right\rfloor=\left\lfloor\frac{n}{b}\right\rfloor$, then $\left\lfloor\frac{n}{ta}\right\rfloor=\left\lfloor\frac{n}{tb}\right\rfloor$.
\end{lemma}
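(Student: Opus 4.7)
The plan is to reduce the statement to a standard iterated-floor identity, namely
\[
\left\lfloor \frac{\lfloor n/a\rfloor}{t}\right\rfloor \;=\; \left\lfloor \frac{n}{ta}\right\rfloor
\]
for all positive integers $n,a,t$. Once this identity is in hand, the lemma follows immediately: applying it to $a$ and to $b$ separately, and using the hypothesis $\lfloor n/a\rfloor=\lfloor n/b\rfloor$, gives
\[
\left\lfloor\frac{n}{ta}\right\rfloor=\left\lfloor\frac{\lfloor n/a\rfloor}{t}\right\rfloor=\left\lfloor\frac{\lfloor n/b\rfloor}{t}\right\rfloor=\left\lfloor\frac{n}{tb}\right\rfloor,
\]
which is exactly what we want. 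So the whole work is in establishing the identity.

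To prove the identity, I would set $k=\lfloor n/a\rfloor$ and $m=\lfloor n/(ta)\rfloor$ and show $m=\lfloor k/t\rfloor$. By definition, $m$ is the unique nonnegative integer satisfying $mta\le n<(m+1)ta$. Dividing by $a$ and applying the characterization of the floor of a rational to $n/a$, this is equivalent to $mt\le \lfloor n/a\rfloor<(m+1)t$, i.e.\ $mt\le k<(m+1)t$, which is exactly the statement $m=\lfloor k/t\rfloor$. The only mildly delicate point here is passing from $mta\le n<(m+1)ta$ to the inequality $mt\le \lfloor n/a\rfloor$; this step uses that $mt$ is an integer, so $mta\le n$ implies $mt\le n/a$ implies $mt\le \lfloor n/a\rfloor$. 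The opposite inequality $\lfloor n/a\rfloor<(m+1)t$ follows since $(m+1)t$ is also an integer and $n/a<(m+1)t$.

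The main obstacle, if one can call it that, is simply being careful with the two-sided handling of strict versus weak inequalities when moving the floor inside the division; once one writes out the chain of equivalences above, the argument is essentially a single line. No number-theoretic machinery beyond the definition of the floor function is required, and the proof does not need the hypothesis on $a$ and $b$ until the very last step where the identity is applied twice.
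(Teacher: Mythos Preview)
Your proof is correct and takes essentially the same approach as the paper: the paper simply remarks that the lemma is a direct application of the identity $\left\lfloor\frac{x}{n}\right\rfloor=\left\lfloor\frac{\lfloor x\rfloor}{n}\right\rfloor$ for $x\in\mathbb{R}$ and $n\in\mathbb{Z}^+$, which is exactly the iterated-floor identity you isolate and prove. You go further by supplying a self-contained proof of that identity via the inequality characterization of the floor, whereas the paper just quotes it as known.
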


The proof of Lemma \ref{lem1.12} is a direct application of the fact $\left\lfloor\frac{x}{n}\right\rfloor=\left\lfloor\frac{\lfloor x\rfloor}{n}\right\rfloor$  for any $x\in\mathbb{R}$ and $n\in\mathbb{Z}^+,$ where $\mathbb{R}$ is the set of real numbers. Lemmas \ref{lem1.15} and \ref{lem1.12} will be used in the proof of Theorem \ref{thm eq-degree2}.

\section{Number of Edges of $D_n$}

In this section, we derive a formula for the number of edges of maximal Diophantine graphs $D_n$ using a lemma that characterizes nonadjacent vertices. In the remainder of this paper, let $D_n$ denote a maximal Diophantine graph with order $n$ that admits a Diophantine labelling $f$.

\begin{lemma}\label{lem1}
   For each two vertices $u,v$ in $D_n$, $uv\notin E(D_n)$ if and only if there exists a prime number $p$ such that $f(u), f(v)\in M_{p^{\acute{v}_{p}(n)}},$ where $M_{p^{\acute{v}_{p}(n)}}$ is the set of multiple of $p^{\acute{v}_{p}(n)}$ up to $n$.
\end{lemma}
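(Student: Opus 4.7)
The plan is to reduce the non-adjacency condition $uv\notin E(D_n)$ to the arithmetic condition $(f(u),f(v))\nmid n$ via the maximality of $D_n$, and then translate the resulting divisibility statement into a $p$-adic valuation statement using the Fundamental Theorem of Arithmetic.

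For the backward direction, I would first assume that there is a prime $p$ with $f(u),f(v)\in M_{p^{\acute{v}_p(n)}}$. Then $p^{\acute{v}_p(n)}$ divides both labels, hence divides their greatest common divisor $(f(u),f(v))$. By the very definition $\acute{v}_p(n)=v_p(n)+1$, we have $p^{\acute{v}_p(n)}\nmid n$, so $(f(u),f(v))\nmid n$. This means the pair $uv$ violates the Diophantine labelling condition under $f$, so $uv\notin E(D_n)$.

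For the forward direction, I would use maximality in an essential way. Suppose $uv\notin E(D_n)$. Since $D_n$ is a maximal Diophantine graph under the labelling $f$, adding any missing edge must destroy the Diophantine property; thus $(f(u),f(v))\nmid n$. Set $d=(f(u),f(v))$. By Theorem \ref{FTA} applied to $d$ and $n$, the condition $d\nmid n$ forces the existence of at least one prime $p$ with $v_p(d)>v_p(n)$, equivalently $v_p(d)\geq \acute{v}_p(n)$. Using the standard identity
\[
v_p(d)=v_p\bigl((f(u),f(v))\bigr)=\min\{v_p(f(u)),\,v_p(f(v))\},
\]
both $v_p(f(u))$ and $v_p(f(v))$ are at least $\acute{v}_p(n)$, so $p^{\acute{v}_p(n)}\mid f(u)$ and $p^{\acute{v}_p(n)}\mid f(v)$. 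Since $f$ maps into $\{1,\dots,n\}$, this places $f(u)$ and $f(v)$ in $M_{p^{\acute{v}_p(n)}}$ as required.

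The only real subtlety is justifying the step $uv\notin E(D_n)\Rightarrow (f(u),f(v))\nmid n$, and this is exactly where the \emph{maximality} of $D_n$ enters: without it, a missing edge would not necessarily violate the divisibility condition. Beyond that, the argument is a clean unpacking of definitions, and no auxiliary number-theoretic lemma from Section~2 beyond the Fundamental Theorem of Arithmetic and the valuation formula for gcd is needed.
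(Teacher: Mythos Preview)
Your proof is correct and follows essentially the same approach as the paper: both reduce $uv\notin E(D_n)$ to $(f(u),f(v))\nmid n$ (using maximality for the forward implication and the Diophantine labelling condition for the backward one), then translate this into a $p$-adic valuation condition via the identity $v_p\bigl((f(u),f(v))\bigr)=\min\{v_p(f(u)),v_p(f(v))\}$, arriving at $p^{\acute v_p(n)}\mid f(u)$ and $p^{\acute v_p(n)}\mid f(v)$. The paper compresses everything into a single chain of biconditionals without explicitly isolating the role of maximality, whereas you split the argument into two directions and flag that step --- but the logical content is identical.
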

\hspace{-0.5cm}\emph{Proof.} Let $u,v$ be two vertices in $D_n$ such that
\begin{equation*}
    \begin{split}
      uv\notin E(D_n)  &\quad\mbox{if and only if} \quad (f(u),f(v))\nmid n\\
                       &\quad\mbox{if and only if} \quad v_p\left(\big(f(u),f(v)\big)\right)>v_p(n) \ \mbox{for some prime} \ p\\
                       &\quad\mbox{if and only if} \quad \min\left\{v_p\left(f(u)\right), \ v_p\left(f(v)\right)\right\}\geq\acute{v}_{p}(n)\ \mbox{for some prime} \ p\\
                       &\quad\mbox{if and only if} \quad v_p(f(u))\geq \acute{v}_{p}(n) \ \mbox{and} \ v_p(f(v))\geq\acute{v}_{p}(n)\ \mbox{for some prime} \ p\\
                       &\quad\mbox{if and only if} \quad p^{\acute{v}_{p}(n)}\mid f(u)\ \mbox{and} \ p^{\acute{v}_{p}(n)}\mid f(v) \ \mbox{for some prime} \ p\\
                       &\quad\mbox{if and only if} \quad f(u), f(v)\in M_{p^{\acute{v}_{p}(n)}}\ \mbox{for some prime} \ p.\hspace{3.3cm}\square
    \end{split}
\end{equation*}

It is clear that from the Lemma \ref{lem1}, if such a prime number $p$ exists, then $p<\frac{n}{2}$. Any edge in $D_n$ represents a solvable linear Diophantine equation $ax+by=n$ and any non-edge in $D_n$ represents an unsolvable linear Diophantine equation $ax+by=n$, more precisely, using lemma \ref{lem1}, we have for each $u,v\in V(D_n)$, $uv\notin E(D_n)$ if and only if the linear equation $f(u)x+f(v)y=n$ is unsolvable.

\begin{theorem} \label{thm_V.I.}
The size of $D_n$ is given by
\begin{equation*}
\begin{split}
     |E(D_n)|=C(n,2)&-\sum_{1\leq i\leq \pi(n)} C\Bigg(\Bigg\lfloor\frac{n}{p_i^{\acute{v}_{p_i}(n)}}\Bigg\rfloor, 2\Bigg)
        +\sum_{1\leq i<j\leq \pi(n)}C\Bigg(\Bigg\lfloor\frac{n}{p_{i}^{\acute{v}_{p_i}}(n)p_{j}^{\acute{v}_{p_j}(n)}}\Bigg\rfloor, 2\Bigg)\\
        &-\dots + (-1)^{\pi(n)}C\left(\left\lfloor\frac{n}{\prod\limits_{1\leq i\leq \pi(n)} p_i^{\acute{v}_{p_i}(n)}}\right\rfloor, 2\right),
\end{split}
\end{equation*}
 where $p_i<n$, $i=1,2,\dots \pi(n)$ are distinct prime numbers, $\pi(n)$ is the number of prime numbers not exceeding $n$ and $C(n,r)$ is the binomial coefficient, defined as $C(n,r):=0$ if $n<r$ and $ C(n,r):=\binom{n}{r}$ if $n\geq r$.
\end{theorem}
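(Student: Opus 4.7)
The plan is to count non-edges of $D_n$ and subtract from $C(n,2)$ using inclusion--exclusion, with Lemma \ref{lem1} providing the characterization of non-edges. Because $D_n$ is maximal, every pair $\{u,v\}$ that is not forbidden by the divisibility criterion must in fact be an edge, so $|E(D_n)| = C(n,2) - N$, where $N$ counts the pairs $\{u,v\}$ for which some prime $p$ satisfies $f(u), f(v) \in M_{p^{\acute{v}_p(n)}}$.

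For each prime $p \le n$, let $A_p$ denote the set of unordered pairs $\{u,v\}$ with $f(u), f(v) \in M_{p^{\acute{v}_p(n)}}$. Since $|M_{p^{\acute{v}_p(n)}}| = \lfloor n/p^{\acute{v}_p(n)}\rfloor$, we obtain $|A_p| = C\!\left(\lfloor n/p^{\acute{v}_p(n)}\rfloor, 2\right)$, where the convention $C(m,2)=0$ for $m<2$ absorbs the primes $p$ with $p^{\acute{v}_p(n)} > n/2$. By Lemma \ref{lem1}, the set of non-edges is exactly $\bigcup_{p} A_p$, so inclusion--exclusion over $p_1,\dots,p_{\pi(n)}$ gives
\[
N \;=\; \sum_{\emptyset \ne S \subseteq \{p_1,\dots,p_{\pi(n)}\}} (-1)^{|S|+1}\, \Big|\bigcap_{p \in S} A_p\Big|.
\]

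The main step is identifying the intersections. A pair lies in $A_{p_{i_1}}\cap\cdots\cap A_{p_{i_k}}$ if and only if both of its labels are simultaneously divisible by each of $p_{i_1}^{\acute{v}_{p_{i_1}}(n)},\dots,p_{i_k}^{\acute{v}_{p_{i_k}}(n)}$; because these critical prime powers are pairwise coprime, this is equivalent to divisibility by their product, yielding
\[
\bigcap_{s=1}^{k} M_{p_{i_s}^{\acute{v}_{p_{i_s}}(n)}} \;=\; M_{\prod_{s=1}^{k} p_{i_s}^{\acute{v}_{p_{i_s}}(n)}}.
\]
Hence $\Big|\bigcap_{s} A_{p_{i_s}}\Big| = C\!\left(\left\lfloor n / \prod_{s} p_{i_s}^{\acute{v}_{p_{i_s}}(n)}\right\rfloor,\, 2\right)$, and substituting into the inclusion--exclusion expansion above, then moving all signs onto the $C(n,2)-N$ expression, recovers the stated formula. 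There is no substantial obstacle beyond this bookkeeping: the coprimality of distinct critical prime powers is what makes each intersection a single set $M_m$, and the $C(\cdot,2)=0$ convention automatically truncates the sum past the range of primes that actually contribute.
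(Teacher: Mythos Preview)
Your proof is correct and follows essentially the same route as the paper: characterize non-edges via Lemma~\ref{lem1}, express their count as $\left|\bigcup_{i} A_{p_i}\right|$, and apply inclusion--exclusion, identifying each intersection $\bigcap_s M_{p_{i_s}^{\acute v_{p_{i_s}}(n)}}$ with $M_{\prod_s p_{i_s}^{\acute v_{p_{i_s}}(n)}}$ via pairwise coprimality of the critical prime powers. If anything, you are slightly more explicit than the paper about why the intersection collapses to a single $M_m$, but the argument is the same.
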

\hspace{-0.5cm}\emph{Proof.}
Let $p_i<n$, $i=1,2, \dots, \pi(n)$ be distinct prime numbers. Then, using Lemma \ref{lem1}, we get for each two vertices $u,v$ in $D_n$, $uv\notin E(D_n)$ if and only if $f(u),f(v)\in M_{p_i^{\acute{v}_{p_i}(n)}}$ for some $i=1, 2, \dots, \pi(n)$. Define
\begin{equation*}
S_{p_i}:=\left\{uv\notin E(D_n):  f(u),f(v)\in M_{p_i^{\acute{v}_{p_i}(n)}}\right\},\quad i=1,2, \dots, \pi(n).
\end{equation*}
Thus, we have the following equation
\begin{equation*}
  \begin{split}
  |E(D_n)| &= |E(K_n)|-|E(\overline{D_n})|, \ \mbox{where $\overline{D_n}$ is the complement of $D_n$.}\\
           &= |E(K_n)|-\left|\bigcup\limits_{1\leq i\leq\pi(n)} S_{p_i}\right|.
  \end{split}
\end{equation*}
Using the inclusion-exclusion principle (see \cite{Rosen2}), we have
\begin{equation*}
\begin{split}
    |E(D_n)|
            &= C(n,2)-\sum_{1\leq i\leq\pi(n)}|S_{p_i}|+\sum_{1\leq i<j\leq\pi(n)}|S_{p_i}\cap S_{p_j}|
            -\dots+(-1)^{\pi(n)}\left|\bigcap\limits_{1\leq i\leq\pi(n)}S_{p_i}\right|.\\
            &= C(n,2)-\sum_{1\leq i\leq \pi(n)}C\left(\left|M_{p_i^{\acute{v}_{p_i}(n)}}\right|, 2\right)
            +\sum_{1\leq i<j\leq \pi(n)}C\left(\left|M_{p_i^{\acute{v}_{p_i}(n)}}\cap M_{p_j^{\acute{v}_{p_j}(n)}}\right| ,2\right)\\
            &\qquad\qquad\quad-\dots+(-1)^{\pi(n)}C\left(\left|\bigcap\limits_{1\leq i\leq\pi(n)}M_{p_i^{\acute{v}_{p_i}(n)}}\right|, 2\right).\\
            &= C(n,2)-\sum_{1\leq i\leq \pi(n)}C\left(\left\lfloor\frac{n}{p_i^{\acute{v}_{p_i}(n)}}\right\rfloor, 2\right)
            +\hspace{-0.1cm}\sum_{1\leq i<j\leq \pi(n)}C\left(\left\lfloor\frac{n}{p_{i}^{\acute{v}_{p_i}(n)}p_{j}^{\acute{v}_{p_j}(n)}}\right\rfloor, 2\right)\\
            &\qquad\qquad\quad-\dots+(-1)^{\pi(n)}C\left(\left\lfloor\frac{n}{\prod\limits_{1\leq i\leq \pi(n)} p_i^{\acute{v}_{p_i}(n)}}\right\rfloor, 2\right).\hspace{4.1cm}\square
  \end{split}
\end{equation*}

\section{Degrees of Vertices}

In this section, we proved that there are infinitely many Diophantine graphs which are not prime graphs using a characterization of formula for full degree vertices. Moreover, we give a formula for the degree of vertices. Necessary and sufficient conditions of equal degree vertices with specific labels and specific reduced labels are investigated.

\subsection{The Degree of a Vertex in $D_n$}

\begin{theorem}\label{lem2}
  For each vertex $u$ in $D_n$ and prime number $p$, $\deg(u)=n-1$ if and only if
   $$f(u)=p^{\acute{v}_p(n)}, \ \frac{n}{2}<f(u)<n, \quad \mbox{\textbf{or}} \quad f(u)\mid n,$$
  where the exclusive \textbf{or} will be typed in bold while the inclusive or is typed as usual.
\end{theorem}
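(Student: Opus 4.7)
The plan is to translate the full-degree condition into a statement about the sets $M_{p^{\acute{v}_p(n)}}$ by means of Lemma \ref{lem1}, and then run a short case analysis. A vertex $u$ in $D_n$ has $\deg(u)=n-1$ if and only if, for every other vertex $v$, the pair $uv$ lies in $E(D_n)$, and by Lemma \ref{lem1} this fails precisely when some prime $p$ puts both $f(u)$ and $f(v)$ into $M_{p^{\acute{v}_p(n)}}$. So the condition $\deg(u)=n-1$ is equivalent to the following: for every prime $p$, if $f(u)\in M_{p^{\acute{v}_p(n)}}$, then $M_{p^{\acute{v}_p(n)}}=\{f(u)\}$.

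From here I would split on whether $f(u)\mid n$. If $f(u)\mid n$, then $v_p(f(u))\le v_p(n)<\acute{v}_p(n)$ for every prime $p$, hence $f(u)\notin M_{p^{\acute{v}_p(n)}}$ for any $p$, and the criterion is vacuously true; this recovers the clause $f(u)\mid n$.

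If $f(u)\nmid n$, then Lemma \ref{rem1}, part $i$, supplies at least one prime $p$ with $f(u)\in M_{p^{\acute{v}_p(n)}}$. For that prime the criterion forces $|M_{p^{\acute{v}_p(n)}}|=\lfloor n/p^{\acute{v}_p(n)}\rfloor=1$, which translates directly into $n/2<p^{\acute{v}_p(n)}\le n$, and further forces $f(u)$ to equal the unique element of the set, namely $p^{\acute{v}_p(n)}$. Since $v_p(p^{\acute{v}_p(n)})>v_p(n)$, the value $p^{\acute{v}_p(n)}$ does not divide $n$, so in fact $p^{\acute{v}_p(n)}<n$, which yields the stated bounds $n/2<f(u)<n$.

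The one small subtlety I would flag is uniqueness of the prime $p$: once $f(u)=p^{\acute{v}_p(n)}$ is a pure prime power, no other prime $q\neq p$ can satisfy $q^{\acute{v}_q(n)}\mid f(u)$ (that would force $q\ge 2$ to divide a power of $p$), so the quantified condition at the end of the first paragraph imposes no further constraints. The same observation also shows that the two clauses are mutually exclusive, justifying the exclusive \textbf{or} in the statement. No step is deep; the only thing worth being careful about is turning the counting equality $\lfloor n/p^{\acute{v}_p(n)}\rfloor=1$ into the stated inequality $n/2<f(u)<n$ via the fact that $p^{\acute{v}_p(n)}\nmid n$.
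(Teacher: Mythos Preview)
Your proof is correct and follows essentially the same approach as the paper: both hinge on the observation that $\deg(u)<n-1$ precisely when some $M_{p^{\acute{v}_p(n)}}$ contains $f(u)$ together with a second element, and both then split on whether $f(u)\mid n$. Your version packages this via Lemma~\ref{lem1} into the clean criterion ``$f(u)\in M_{p^{\acute{v}_p(n)}}\Rightarrow |M_{p^{\acute{v}_p(n)}}|=1$'' and reads off the conclusion directly, whereas the paper argues the forward direction by contradiction and exhibits the offending non-neighbour ($p^{\acute{v}_p(n)}$ or $2p^{\acute{v}_p(n)}$) explicitly, but the substance is the same.
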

\begin{proof}
Let $u\in V(D_n)$ with $\deg(u)=n-1$. By the way of contradiction, suppose
$$f(u)=p^{\acute{v}_p(n)}, \ \frac{n}{2}<f(u)<n, \quad \mbox{and} \quad f(u)\mid n,$$
or
$$\left(f(u)\neq p^{\acute{v}_p(n)} \quad \mbox{or} \quad 1\leq f(u)\leq\frac{n}{2}\right)\quad \mbox{and} \quad f(u)\nmid n,$$
where $p$ is a prime number. The first case is clearly a contradictory statement. The second case has two subcases as follows.
$$\left(f(u)\neq p^{\acute{v}_p(n)} \quad\mbox{and}\quad f(u)\nmid n\right) \quad\mbox{or}\quad \left( 1<f(u)<\frac{n}{2} \quad\mbox{and}\quad f(u)\nmid n\right),$$
In case of $f(u)\neq p^{\acute{v}_p(n)}$ and $f(u)\nmid n$, we get that there exists a prime number $p$ such that $p\mid f(u)$ and $v_p(f(u))>v_p(n).$ Then, we obtain $v_p\left(f(u)\right)\geq \acute{v}_p(n),$ so $p^{\acute{v}_p(n)}\mid f(u)$. Consequently, one can see that
$\left(f(u),p^{\acute{v}_p(n)}\right)=p^{\acute{v}_p(n)}\nmid n,$
which is a contradiction of the full degree of $u$. In the other case of $1<f(u)=p^{\acute{v}_p(n)}<\frac{n}{2}$, we have $(f(u),2p^{\acute{v}_p(n)})\nmid n,$
which contradicts the full degree of $u$. Hence, one can see that
$$f(u)=p^{\acute{v}_p(n)}, \ \frac{n}{2}<p^{\acute{v}_p(n)}<n \quad \mbox{\textbf{or}}\quad f(u)\mid n.$$

Conversely, in the first case of $f(u)\mid n$, we have that for each $v\in V(D_n)$, $(f(u),f(v))\mid n$. In the second case of $f(u)=p^{\acute{v}_p(n)}$ and $\frac{n}{2}<f(u)<n$, where $p$ is a prime number, we get that any proper multiple of $f(u)$ is greater than $n$. Since $f(u)=p^{\acute{v}_p(n)}$, therefore for all $v\in V(D_n)$, $(f(u), f(v))\mid p^{v_p(n)}$, which means for all $v\in V(D_n)$, $(f(u), f(v))\mid n$. In both cases, we get $\deg(u)=n-1$.
\end{proof}

Similar results can be deduced in the case of a maximal prime graphs $R_n$ for Lemma \ref{lem1} and Theorem \ref{lem2}. In light of Lemma \ref{lem1}, Theorem \ref{lem2}, and the following theorem \ref{C4} due to M. A. Seoud and M. Z. Youssef, we have the following results.
\begin{theorem}\cite{Seoud1}, \cite{thesis}\label{C4}
  If $G$ is a simple graph of $n$ vertices which has more than
  $$\pi\left(n\right)+\pi\left(\frac{n}{2}\right)+1$$
  vertices of degree $n-1$, then $G$ is not prime, where
  $\pi(n)$ is the number of prime numbers not exceeding $n$.
\end{theorem}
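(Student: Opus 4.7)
The plan is to identify exactly which integers in $\{1,\dots,n\}$ can serve as the label of a full-degree vertex under any prime labelling, count them, and check that the bound $\pi(n)+\pi(n/2)+1$ exceeds this count.

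First, I would observe that if $\deg(v)=n-1$ in a prime graph with prime labelling $f$, then $v$ is adjacent to every other vertex, so $(f(v),f(u))=1$ for all $u\neq v$. Because $f$ is a bijection onto $\{1,\dots,n\}$, this forces $f(v)$ to be coprime to each of the remaining $n-1$ integers in $\{1,\dots,n\}$.

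Next, I would classify those $k\in\{1,\dots,n\}$ enjoying this strong coprimality property by cases. The value $k=1$ trivially qualifies; a prime $p>n/2$ qualifies because $2p>n$ leaves $p$ as its only multiple in $\{1,\dots,n\}$; a prime $p\leq n/2$ is ruled out by $2p\in\{1,\dots,n\}\setminus\{p\}$; and a composite $k$ is ruled out by any prime divisor $p$ of $k$ with $p<k$ (existing by the Fundamental Theorem of Arithmetic, Theorem \ref{FTA}), which lies in $\{1,\dots,n\}\setminus\{k\}$ and satisfies $(k,p)=p>1$. The admissible label set is therefore $L=\{1\}\cup\{p\text{ prime}:n/2<p\leq n\}$, of size $1+\pi(n)-\pi(n/2)$.

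Finally, since $f$ is injective, the number of full-degree vertices of any prime graph of order $n$ is at most $|L|=1+\pi(n)-\pi(n/2)$, which is clearly no greater than $\pi(n)+\pi(n/2)+1$. Hence a graph with strictly more than $\pi(n)+\pi(n/2)+1$ full-degree vertices cannot admit a prime labelling, proving the theorem. The only place the argument needs care is the composite case of the classification, but that step amounts to a single appeal to prime factorization and poses no real obstacle; everything else follows mechanically from the bijectivity of $f$.
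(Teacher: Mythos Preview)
The paper does not supply a proof of this theorem; it is quoted from \cite{Seoud1} and \cite{thesis} and used as an imported result. Your argument is correct and self-contained: the characterisation of the admissible label set $L=\{1\}\cup\{p\ \text{prime}:n/2<p\leq n\}$ is precisely the prime-graph analogue of the paper's Theorem~\ref{lem2} (which the paper mentions in passing just after Theorem~\ref{lem2} without stating it), and the injectivity of $f$ then caps the number of full-degree vertices at $|L|=1+\pi(n)-\pi(n/2)$.

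Note that you actually establish the sharper bound $1+\pi(n)-\pi(n/2)$, and then deduce the stated bound $\pi(n)+\pi(n/2)+1$ from the trivial inequality $\pi(n/2)\geq 0$. This is worth flagging explicitly: the bound in the theorem as printed is not tight, and the corollary immediately following it in the paper in fact uses the number $\pi(n)+\pi(n/2)+1$ as if it were the \emph{exact} count of full-degree vertices in $R_n$, which your analysis shows is not the case (the exact count is $1+\pi(n)-\pi(n/2)$). Your proof is fine for the theorem as stated; the looseness is in the cited statement, not in your argument.
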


\begin{corollary}
  There are infinitely many Diophantine graphs which are not prime graphs.
\end{corollary}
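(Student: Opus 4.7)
The plan is to establish the stronger statement that $D_n$ is not a prime graph for every composite integer $n \ge 4$; since distinct $n$ give graphs of distinct orders, this will produce an infinite family of pairwise non-isomorphic Diophantine graphs none of which is a prime graph.

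The key idea is a direct edge-count comparison between $D_n$ and the maximal prime graph $R_n$ under the identity labelling on $\{1,\ldots,n\}$. By Lemma \ref{lem1} together with its straightforward prime-graph analog (which the authors note holds), the inclusion $E(R_n)\subseteq E(D_n)$ is automatic, because the condition $(a,b)=1$ already forces $(a,b)\mid n$. For composite $n\ge 4$, I would then exhibit one extra edge: since $n$ is composite, $n$ has a prime divisor $p$ with $p\le\sqrt{n}\le n/2$, so both $p$ and $2p$ lie in $\{1,\ldots,n\}$, and the pair $\{p,2p\}$ has $(p,2p)=p$, which divides $n$ but is strictly greater than $1$. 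Hence $\{p,2p\}\in E(D_n)\setminus E(R_n)$, so $|E(D_n)|>|E(R_n)|$.

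To conclude, I would invoke the maximality of $R_n$ among prime graphs of order $n$: given any prime graph $G$ of order $n$ with prime labelling $g$, every edge of $G$ corresponds under $g$ to a coprime pair in $\{1,\ldots,n\}$, so $G$ embeds as a subgraph of $R_n$. Thus $|E(G)|\le|E(R_n)|<|E(D_n)|$ for every prime graph $G$ of order $n$, forcing $D_n\not\cong G$ and so $D_n$ is not a prime graph. Ranging over the infinitely many composite integers $n\ge 4$ yields the corollary.

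The main obstacle is the maximality identification in the last step, namely that $R_n$ realizes the maximum edge count among all prime graphs of order $n$. This is the prime-graph analog of Lemma \ref{lem1} (a non-edge in the maximal prime graph corresponds to a common prime factor of the two labels) combined with the defining maximality of $R_n$, and is routine. Aligned with the hint preceding the corollary, one may independently certify a single case by the Seoud--Youssef criterion: for $n=12$, Theorem \ref{lem2} enumerates the full-degree labels of $D_{12}$ as the six divisors $1,2,3,4,6,12$ of $12$ together with the four critical prime powers $7,8,9,11$ lying in $(6,12)$, giving $10>\pi(12)+\pi(6)+1=9$, so Theorem \ref{C4} directly forces $D_{12}$ to be non-prime.
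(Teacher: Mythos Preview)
Your argument is correct and proves a strictly stronger statement than the paper does: you show $D_n$ is not a prime graph for \emph{every} composite $n\ge 4$, whereas the paper's proof only treats the prime-power case $n=p^k$ with $k>1$. The two proofs proceed along genuinely different lines. The paper applies Theorem~\ref{lem2} to count full-degree vertices in $D_{p^k}$ (divisors of $p^k$ together with critical prime powers in $(p^k/2,p^k)$), obtains at least $\pi(p^k)+\pi(p^k/2)+k+1$ such vertices, and then invokes the Seoud--Youssef bound (Theorem~\ref{C4}) to rule out a prime labelling. Your route is more elementary: it recycles the edge $\{p,2p\}$ already exhibited in the proof of Theorem~\ref{prime-dioph2} to get $|E(D_n)|>|E(R_n)|$, and then adds the easy observation that $|E(R_n)|$ is the maximum edge count over all prime graphs of order $n$, so no prime graph can be isomorphic to $D_n$. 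This avoids Theorem~\ref{C4} entirely and needs nothing from Section~4. The trade-off is that the paper's approach showcases the structural results just proved (the full-degree characterization), while yours is shorter, self-contained, and covers all composite $n$ at once. Your closing $n=12$ check via Theorem~\ref{C4} is also computed correctly.
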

\begin{proof}
Let $p$ be a prime number and an integer $k>1$. Maximal Diophantine graphs $D_{p^k}$ with $p^k$ vertices can not be prime graphs because the number of full degree vertices in $R_{p^k}$ that is equal to
$$\pi\left(p^k\right)+\pi\left(\frac{p^k}{2}\right)+1, \ \mbox{(by appling Theorem \ref{C4})},$$
and it is less than the number of full degree vertices in $D_{p^k}$ which is at least
$$\pi\left(p^k\right)+\pi\left(\frac{p^k}{2}\right)+k+1, \ \mbox{(by the aid of Theorem \ref{lem2})}.$$
Hence, the proof follows.
\end{proof}

\begin{lemma}\label{lem3}
   Let $u$ be a vertex in $D_n$. If $f^*(u)=p^k$ for some prime number $p$ and $k\in\mathbb{N}$, then
\begin{equation*}
\deg(u)=\left\{
           \begin{array}{ll}
             n-1,                                                      & \hbox{$k=0.$} \\
             n-\Big\lfloor\frac{n}{p^{\acute{v}_p(n)}}\Big\rfloor,     & \hbox{$k>0.$}
           \end{array}
       \right.
\end{equation*}
\end{lemma}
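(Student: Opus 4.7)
The plan is to split on the two cases and, in each case, count non-neighbors of $u$ by using Lemma \ref{lem1} together with Lemma \ref{rem1}(i).

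For $k=0$, I would observe that $f^*(u)=p^0=1$ forces $f(u)=(f(u),n)$, i.e.\ $f(u)\mid n$. The conclusion $\deg(u)=n-1$ is then immediate from Theorem \ref{lem2}, since having $f(u)\mid n$ is one of the listed full-degree conditions.

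For $k>0$, I would first argue that $p$ is the \emph{unique} prime $q$ for which $f(u)\in M_{q^{\acute{v}_{q}(n)}}$. By Lemma \ref{rem1}(i), $q^{\acute{v}_{q}(n)}\mid f(u)$ is equivalent to $q\mid f^*(u)=p^k$; since $p^k$ has $p$ as its only prime divisor, this happens exactly when $q=p$. In particular, $p^{\acute{v}_p(n)}\mid f(u)$, so $f(u)\in M_{p^{\acute{v}_p(n)}}$. Applying Lemma \ref{lem1} in the contrapositive direction, a vertex $v\neq u$ is non-adjacent to $u$ in $D_n$ iff there exists some prime $q$ with $f(u),f(v)\in M_{q^{\acute{v}_q(n)}}$; by the uniqueness just established this prime must be $p$, and so the condition reduces to $f(v)\in M_{p^{\acute{v}_p(n)}}$.

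It then remains a counting step. The set of labels of non-neighbors of $u$ is $M_{p^{\acute{v}_p(n)}}\setminus\{f(u)\}$, which has cardinality $\lfloor n/p^{\acute{v}_p(n)}\rfloor-1$. Hence
\[
\deg(u)=(n-1)-\left(\left\lfloor\frac{n}{p^{\acute{v}_p(n)}}\right\rfloor-1\right)=n-\left\lfloor\frac{n}{p^{\acute{v}_p(n)}}\right\rfloor,
\]
as desired. I expect no serious obstacle: the only subtle point is the uniqueness of the prime $q$, and this is a one-line consequence of Lemma \ref{rem1}(i) once one notices that $f^*(u)$ being a prime power pins down the set of primes dividing $f(u)$ to power higher than $v_p(n)$.
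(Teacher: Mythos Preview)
Your proposal is correct and follows essentially the same route as the paper: both split on $k=0$ versus $k>0$, invoke Theorem~\ref{lem2} in the first case, and in the second case combine Lemma~\ref{rem1}(i) with Lemma~\ref{lem1} to identify the non-neighbors of $u$ as $M_{p^{\acute{v}_p(n)}}\setminus\{f(u)\}$ before doing the identical count. Your explicit justification that $p$ is the \emph{only} prime $q$ with $f(u)\in M_{q^{\acute{v}_q(n)}}$ is a point the paper leaves implicit, so if anything your write-up is slightly cleaner.
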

\begin{proof}
Let $f^*(u)=p^k$, where $u\in V(D_n)$, $p$ is a prime number and $k\in\mathbb{N}$. In the case of $k=0$, we have $f^*(u)=1$. Since $f^*(u)=\frac{f(u)}{(f(u),n)}$, we have $f(u)\mid n$. Therefore, using Theorem \ref{lem2}, $\deg(u)=n-1$. In the other case of $k>0$, we have
$$p\mid f^*(u) \quad\mbox{if and only if}\quad p^{\acute{v}_p(n)}\mid f(u), \quad \mbox{(by applying Lemmas \ref{rem1}, part $i.$)}.$$
Let $s\in V(D_n)-\{u\}$ such that
$$su\notin E(D_n)\quad\mbox{if and only if}\quad f(s), f(u)\in M_{p^{\acute{v}_p(n)}},\quad \mbox{(by the aid of Lemma \ref{lem1})}.$$
Therefore,
$\deg(u)=n-1-\left(\left|M_{p^{\acute{v}_p(n)}}\right|-1\right)=n-\left\lfloor\frac{n}{p^{\acute{v}_p(n)}}\right\rfloor.$
\end{proof}

In a very similar way of Lemma \ref{lem3}, using the inclusion-exclusion principle and the identity $\sum\limits_{i=0}^{r}(-1)^r\binom{r}{i}=0$, one can prove the following theorem.\\

\begin{theorem}\label{deg(u)}
   Let $u$ be a vertex in $D_n$. If $f^*(u)=\prod\limits_{i=1}^{r}p_i^{k_i}$, where $p_i$, $i=1,2,\dots,r$ are distinct prime numbers and $k_i\in\mathbb{N}$, $i=1,2,\dots,r$, then
\begin{equation*}
\deg(u)=\left\{\begin{array}{ll}
               n-1,                                                                                                            &\\
               \hspace{10cm}\hbox{if $f^*(u)=1.$} \\
               n-\sum\limits_{1\leq i\leq r}\left\lfloor\frac{n}{p_i^{\acute{v}_{p_i}(n)}}\right\rfloor
                 +\sum\limits_{1\leq i.j\leq r}\left\lfloor\frac{n}{p_i^{\acute{v}_{p_i}(n)}p_j^{\acute{v}_{p_j}(n)}}\right\rfloor
                 -\dots +(-1)^{r}\left\lfloor\frac{n}{\prod\limits_{1\leq i\leq r}p_i^{\acute{v}_{p_i}(n)}}\right\rfloor,       &\\
               \hspace{10cm}\hbox{if $f^*(u)>1.$}
              \end{array}
       \right.
\end{equation*}
\end{theorem}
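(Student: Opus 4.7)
The plan is to generalize Lemma \ref{lem3} by replacing its single set of non-neighbors with a union indexed by the prime factors of $f^*(u)$ and applying the inclusion-exclusion principle. First, the case $f^*(u)=1$ is handled directly: by the definition of the reduced label, $f^*(u)=1$ forces $f(u)\mid n$, and Theorem \ref{lem2} then gives $\deg(u)=n-1$.

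For the main case $f^*(u)>1$, I would assume without loss of generality that every exponent $k_i$ is positive (primes with exponent $0$ contribute trivially). By Lemma \ref{rem1}(i), $p_i\mid f^*(u)$ is equivalent to $p_i^{\acute{v}_{p_i}(n)}\mid f(u)$, so $f(u)\in M_{p_i^{\acute{v}_{p_i}(n)}}$ for every $i=1,\dots,r$. By Lemma \ref{lem1}, a vertex $s\neq u$ fails to be adjacent to $u$ iff there is a prime $p$ with $f(u),f(s)\in M_{p^{\acute{v}_{p}(n)}}$; the condition on $f(u)$ forces $p\in\{p_1,\dots,p_r\}$. Defining
\[
A_i:=\left\{s\in V(D_n)\setminus\{u\}:f(s)\in M_{p_i^{\acute{v}_{p_i}(n)}}\right\},
\]
the set of non-neighbors of $u$ is therefore exactly $A_1\cup\cdots\cup A_r$. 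Since the primes $p_1,\dots,p_r$ are distinct, for any $S\subseteq\{1,\dots,r\}$ one has $\bigcap_{i\in S}M_{p_i^{\acute{v}_{p_i}(n)}}=M_{\prod_{i\in S}p_i^{\acute{v}_{p_i}(n)}}$, and because $f(u)$ lies in each of these sets,
\[
\left|\bigcap_{i\in S}A_i\right|=\left\lfloor\frac{n}{\prod_{i\in S}p_i^{\acute{v}_{p_i}(n)}}\right\rfloor-1.
\]

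Applying the inclusion-exclusion principle gives
\[
|A_1\cup\cdots\cup A_r|=\sum_{k=1}^{r}(-1)^{k+1}\sum_{|S|=k}\left(\left\lfloor\frac{n}{\prod_{i\in S}p_i^{\acute{v}_{p_i}(n)}}\right\rfloor-1\right),
\]
and the aggregated $-1$ contributions collapse via $\sum_{k=1}^{r}(-1)^{k+1}\binom{r}{k}=1$, which follows from the identity $\sum_{i=0}^{r}(-1)^{i}\binom{r}{i}=0$ highlighted in the statement. Substituting back into $\deg(u)=(n-1)-|A_1\cup\cdots\cup A_r|$ cancels the isolated $-1$ with the $+1$ produced by the binomial identity and yields exactly the alternating floor sum claimed in the theorem.

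The only delicate point — and the main obstacle, though a modest one — is bookkeeping the ``$-1$'' in each $|\bigcap_{i\in S}A_i|$: this hinges on knowing that $f(u)$ belongs to every $M_{p_i^{\acute{v}_{p_i}(n)}}$ under consideration, which in turn requires restricting the indexing to primes that genuinely divide $f^*(u)$ (i.e.\ the $k_i>0$ case). Once this restriction is made explicit, the binomial identity handles the cleanup automatically, and the argument parallels both Lemma \ref{lem3} and the edge-count computation in Theorem \ref{thm_V.I.}, so no further machinery is needed.
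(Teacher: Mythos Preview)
Your proposal is correct and follows exactly the approach the paper sketches: it generalizes Lemma \ref{lem3} by taking the union of the sets $M_{p_i^{\acute v_{p_i}(n)}}$ over the prime divisors of $f^*(u)$, applies inclusion--exclusion, and uses the binomial identity $\sum_{i=0}^{r}(-1)^{i}\binom{r}{i}=0$ to absorb the ``$-1$'' corrections. In fact you supply more detail than the paper, which simply states that the proof is analogous to Lemma \ref{lem3} with these two ingredients.
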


 Results similar to Theorems \ref{thm_V.I.}, \ref{deg(u)} can be obtained in a maximal prime graph $R_n$, by putting $v_p(n)=0$.

\subsection{Vertices of Equal Degrees $<n-1$}

\begin{lemma}\label{lem4}
  Let $u,v$ be two vertices in $D_n$. If $f(u)\mid f(v)$, then $N(u)-\{v\}\supseteq N(v)-\{u\}$.
\end{lemma}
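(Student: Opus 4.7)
The plan is to show directly that every vertex in $N(v)\setminus\{u\}$ lies in $N(u)\setminus\{v\}$, relying on the fact that in a \emph{maximal} Diophantine graph the edge relation is completely characterized by the gcd condition: for distinct $x,y\in V(D_n)$, $xy\in E(D_n)$ if and only if $(f(x),f(y))\mid n$. One direction follows from $D_n$ being a Diophantine graph; the other follows from maximality, since otherwise the missing edge could be added without violating the labelling condition. (Equivalently one can invoke Lemma \ref{lem1}, which encodes exactly this characterization of non-edges.)

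With this in hand, let $w\in N(v)\setminus\{u\}$, so that $w\neq u$, $w\neq v$, and $wv\in E(D_n)$. The first step is to observe $(f(w),f(v))\mid n$. The second step, which is the only number-theoretic content, is the divisibility
\begin{equation*}
\bigl(f(w),f(u)\bigr)\,\Big|\,\bigl(f(w),f(v)\bigr),
\end{equation*}
which holds because any common divisor $d$ of $f(w)$ and $f(u)$ automatically divides $f(v)$ via the hypothesis $f(u)\mid f(v)$, hence divides $(f(w),f(v))$; taking $d=(f(w),f(u))$ yields the claim. Combining with the first step gives $(f(w),f(u))\mid n$.

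The third step is to invoke maximality of $D_n$ again to conclude $wu\in E(D_n)$, and since $w\neq v$, this places $w$ in $N(u)\setminus\{v\}$, completing the inclusion.

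I do not expect a genuine obstacle here; the only subtlety worth stating carefully is the biconditional form of the edge characterization in a \emph{maximal} Diophantine graph, which is precisely what allows one to pass from the divisibility $(f(w),f(u))\mid n$ back to the existence of the edge $wu$. Everything else is a one-line gcd manipulation using $f(u)\mid f(v)$.
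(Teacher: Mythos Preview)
Your argument is correct and is precisely the straightforward verification the paper alludes to (the paper's own proof consists of the single sentence ``The proof is straightforward''). Your careful articulation of the edge/non-edge biconditional in a maximal Diophantine graph, followed by the gcd divisibility $(f(w),f(u))\mid(f(w),f(v))\mid n$, is exactly what one expects here.
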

\begin{proof}
The proof is straightforward.
\end{proof}

\begin{lemma}\label{lem5}
  Let $u,v$ be two vertices in $D_n$ such that $f(u)\mid f(v)$. If $\deg(u)=\deg(v)$, then $N(u)-\{v\}=N(v)-\{u\}$.
\end{lemma}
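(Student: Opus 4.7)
The plan is to combine Lemma \ref{lem4} with a simple cardinality argument. Lemma \ref{lem4} already gives one inclusion, namely $N(u)-\{v\}\supseteq N(v)-\{u\}$, so the whole task reduces to proving that the two sets have the same (finite) cardinality under the hypothesis $\deg(u)=\deg(v)$.

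First, I would observe that in any simple undirected graph, the indicator $[v\in N(u)]$ equals $[u\in N(v)]$, since both are equivalent to $uv\in E(D_n)$. Call this common value $\varepsilon\in\{0,1\}$. Then by the definition of degree,
\begin{equation*}
|N(u)-\{v\}|=\deg(u)-\varepsilon \qquad\text{and}\qquad |N(v)-\{u\}|=\deg(v)-\varepsilon.
\end{equation*}
Using the hypothesis $\deg(u)=\deg(v)$, these two cardinalities coincide.

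Next, since $N(u)-\{v\}$ and $N(v)-\{u\}$ are finite subsets of $V(D_n)$, one containing the other and both having the same cardinality, they must be equal. This yields $N(u)-\{v\}=N(v)-\{u\}$, completing the proof. No obstacle is really expected here: the entire content of the lemma is repackaging Lemma \ref{lem4} together with the elementary fact that a finite set and a subset of the same size coincide; the divisibility condition $f(u)\mid f(v)$ only enters through its use in Lemma \ref{lem4} to secure the inclusion, and is not needed again for the cardinality step.
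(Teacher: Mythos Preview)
Your proof is correct and follows essentially the same approach as the paper: both invoke Lemma~\ref{lem4} for the inclusion $N(u)-\{v\}\supseteq N(v)-\{u\}$ and then use the equal-degree hypothesis to force equality. Your cardinality argument with the indicator $\varepsilon$ is a slightly cleaner packaging of what the paper does by contradiction (the paper assumes a strict inclusion and derives $\deg(u)>\deg(v)$, which is the same counting in disguise).
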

\begin{proof}
Let $u,v$ be two vertices in $D_n$ such that
$f(u)\mid f(v) \ \mbox{and} \ \deg(u)=\deg(v).$
Let $s\in V(D_n)-\{u, v\}$ such that $sv\in E(D_n)$. Then, using Lemma \ref{lem4}, $su\in E(D_n)$. Hence, $N(u)-\{v\}\supseteq N(v)-\{u\}$. On the other hand, let $s\in V(D_n)-\{u, v\}$ such that $su\in E(D_n)$. Suppose by contrary that $sv\notin E(D_n)$. Since $f(u)\mid f(v)$ and using Lemma \ref{lem4}, therefore $N(u)-\{v\}\supset N(v)-\{u\}$. Then we have $\deg(u)> \deg(v)$, which contradicts the assumption of $\deg(u)=\deg(v)$. Therefore, $sv\in E(D_n)$. Consequently, $N(u)-\{v\}\subseteq N(v)-\{u\}$. Hence, $N(u)-\{v\}=N(v)-\{u\}$.
\end{proof}

It is clear that if $N(u)-\{v\}=N(v)-\{u\}$, then $\deg(u)=\deg(v)$. Thus, under the assumption that $f(u)\mid f(v)$, we have
$N(u)-\{v\}=N(v)-\{u\}$ if and only if $\deg(u)=\deg(v).$
Furthermore, the following example shows that the condition $f(u)\mid f(v)$ is necessary for Lemma \ref{lem5}.

\begin{example}\label{ex1}
  In $D_{20}$, let $f(u)=7$ and $f(v)=8$, where $u,v\in V(D_{20})$. Then $N(u)=V(D_{20})-\{u_1, u\}$, where $u_1\in V(D_{20})$ such that $f(u_1)=14$ and $N(v)=V(D_{20})-\{v_1, v\}$, where $v_1\in V(D_{20})$ such that $f(v_1)=16$. Then we have $\deg(u)=\deg(v)$, while $f(u)\nmid f(v)$, but $N(u)-\{v\}\neq N(v)-\{u\}$.
\end{example}

\begin{lemma}\label{lem6}
  Let $u$ be a vertex in $D_n$. If $deg(u)<n-1$, then there exists a prime number $p$ such that $p^{\acute{v}_p(n)}\mid f(u)$.
\end{lemma}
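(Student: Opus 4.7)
The plan is to prove the contrapositive: if $p^{\acute{v}_p(n)} \nmid f(u)$ for every prime number $p$, then $\deg(u) = n-1$. The key observation is that the condition $p^{\acute{v}_p(n)} \nmid f(u)$ is really a cap on the $p$-adic valuation of $f(u)$, and imposing it simultaneously at every prime forces $f(u)$ to divide $n$.

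More precisely, I would argue as follows. Suppose for contradiction that $\deg(u) < n-1$ yet for every prime $p$ we have $p^{\acute{v}_p(n)} \nmid f(u)$. By definition of the critical prime power, this last condition is equivalent to $v_p(f(u)) < \acute{v}_p(n) = v_p(n)+1$, hence $v_p(f(u)) \leq v_p(n)$ for every prime $p$. Applying the Fundamental Theorem of Arithmetic (Theorem \ref{FTA}) to compare the prime factorizations of $f(u)$ and $n$, this pointwise domination of valuations gives $f(u) \mid n$.

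Now I invoke Theorem \ref{lem2}: one of the two (exclusive) sufficient conditions for $\deg(u) = n-1$ is exactly $f(u) \mid n$. Therefore $\deg(u) = n-1$, contradicting the hypothesis $\deg(u) < n-1$. This contradiction shows that some prime $p$ must satisfy $p^{\acute{v}_p(n)} \mid f(u)$, which is what we wanted.

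I do not foresee any real obstacle here; the lemma is essentially a direct logical rearrangement of the characterization of full-degree vertices given in Theorem \ref{lem2}, together with the elementary fact that divisibility of positive integers can be read off prime-by-prime from $p$-adic valuations. The only subtlety worth being explicit about is the translation between "$p^{\acute{v}_p(n)} \nmid f(u)$ for every prime $p$" and "$f(u) \mid n$", but this is immediate from $\acute{v}_p(n) = v_p(n)+1$.
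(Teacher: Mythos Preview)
Your proof is correct, but the paper takes a different and slightly more direct route. Rather than going through the contrapositive and invoking Theorem~\ref{lem2}, the paper argues forward: since $\deg(u)<n-1$ there is some vertex $s$ with $su\notin E(D_n)$, and then Lemma~\ref{lem1} (the characterization of non-edges) immediately produces a prime $p$ with $f(u),f(s)\in M_{p^{\acute{v}_p(n)}}$, whence $p^{\acute{v}_p(n)}\mid f(u)$. Your approach recovers the lemma as a corollary of the full-degree characterization in Theorem~\ref{lem2}, which is a heavier tool; the paper's argument relies only on the more elementary Lemma~\ref{lem1} and avoids the detour through ``$v_p(f(u))\le v_p(n)$ for all $p$ implies $f(u)\mid n$''. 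Both are perfectly valid, and your version has the minor advantage of making explicit that Lemma~\ref{lem6} is essentially one direction of Theorem~\ref{lem2} in disguise.
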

\begin{proof}
Let $u$ be a vertex in $D_n$ with $deg(u)<n-1$. Then there exists a vertex $s\in V(D_n)$ such that $su\notin E(D_n)$. Therefore, using Lemma \ref{lem1}, there exists a prime number $p$ such that $f(s),f(u)\in M_{p^{\acute{v}_p(n)}}$. Hence, there exists a prime number $p$ such that $p^{\acute{v}_p(n)}\mid f(u)$.
\end{proof}

\begin{theorem}\label{thm eq-degree1}
  Let $f(u)=p^t$ and $f(v)=q^k$ for some $t,k\in\mathbb{Z^+}$, where $u,v\in V(D_n)$ with $\deg(u)<n-1$, $\deg(v)<n-1$ and $p,q$ are prime numbers. Then we have
\begin{equation*}
 \deg(u)=\deg(v) \quad \mbox{if and only if} \quad \acute{v}_p(n)\leq t, \ \acute{v}_q(n)\leq k \quad \mbox{and} \quad \left\lfloor\frac{n}{p^{\acute{v}_p(n)}}\right\rfloor =\left\lfloor\frac{n}{q^{\acute{v}_q(n)}}\right\rfloor.
\end{equation*}
\end{theorem}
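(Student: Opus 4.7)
The plan is to reduce this theorem to a direct application of Lemma \ref{lem3}. The key observation is that both $f(u)=p^{t}$ and $f(v)=q^{k}$ are prime powers, so their reduced labels $f^{*}(u)$ and $f^{*}(v)$ are (possibly trivial) prime powers, placing us squarely in the setting of Lemma \ref{lem3}.

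The first step handles the inequalities $\acute{v}_{p}(n)\leq t$ and $\acute{v}_{q}(n)\leq k$, which appear on the right-hand side of the equivalence but are in fact forced by the hypothesis $\deg<n-1$ alone. Indeed, applying Lemma \ref{lem6} to $u$ produces a prime $p'$ with $p'^{\acute{v}_{p'}(n)}\mid f(u)=p^{t}$; since $p$ is the only prime dividing $p^{t}$, necessarily $p'=p$ and therefore $\acute{v}_{p}(n)\leq t$. The same argument with $v$ gives $\acute{v}_{q}(n)\leq k$. These two conditions are thus automatic under the standing hypotheses, so they will hold on both sides of the biconditional.

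Second, from $t>v_{p}(n)$ I compute $(f(u),n)=p^{v_{p}(n)}$, hence
\begin{equation*}
  f^{*}(u)=\frac{p^{t}}{p^{v_{p}(n)}}=p^{t-v_{p}(n)},
\end{equation*}
which is a positive power of $p$. Analogously $f^{*}(v)=q^{k-v_{q}(n)}$ with exponent $\geq 1$. The nontrivial branch ($k>0$) of Lemma \ref{lem3} then yields
\begin{equation*}
  \deg(u)=n-\left\lfloor\frac{n}{p^{\acute{v}_{p}(n)}}\right\rfloor,\qquad \deg(v)=n-\left\lfloor\frac{n}{q^{\acute{v}_{q}(n)}}\right\rfloor.
\end{equation*}
Equating these two formulas makes both directions of the equivalence immediate.

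Since the proof is essentially bookkeeping on top of Lemmas \ref{lem6} and \ref{lem3}, there is no real obstacle. The only point requiring care is verifying that Lemma \ref{lem3} is applied under its correct hypotheses: the reduced labels must be prime powers (automatic from $f(u)=p^{t}$ and $f(v)=q^{k}$), and we must land in the nontrivial case of exponent $\geq 1$ rather than the trivial exponent-$0$ case that would produce $\deg=n-1$. This last point is precisely what the first step rules out by deriving the strict inequalities $t>v_{p}(n)$ and $k>v_{q}(n)$ from the assumption $\deg<n-1$.
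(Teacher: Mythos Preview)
Your proposal is correct and follows essentially the same route as the paper: use Lemma~\ref{lem6} to force $\acute{v}_{p}(n)\leq t$ and $\acute{v}_{q}(n)\leq k$ from the standing hypothesis $\deg<n-1$, then apply Lemma~\ref{lem3} to obtain the explicit degree formulas and read off the equivalence. Your version is slightly tidier in that you observe once that the two inequalities are automatic (so only the floor condition is live) and you spell out the computation $f^{*}(u)=p^{t-v_{p}(n)}$ needed to invoke the nontrivial branch of Lemma~\ref{lem3}, a step the paper leaves implicit.
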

\begin{proof}
Let $f(u)=p^t$ and $f(v)=q^k$ for some $t,k\in\mathbb{Z^+}$, where $p,q$ are prime numbers and $u,v\in V(D_n)$ with $deg(u)=deg(v)$, $deg(u)<n-1$ and $deg(v)<n-1$. Therefore, using Lemma \ref{lem6}, we have
$$p^{\acute{v}_p(n)}\mid f(u)=p^t\quad \mbox{and}\quad q^{\acute{v}_q(n)}\mid f(v)=q^k.$$
Consequently, $\acute{v}_p(n)\leq t$ and $\acute{v}_q(n)\leq k$. Using Lemma \ref{lem3}, we get
$$deg(u)=n-\left\lfloor\frac{n}{p^{\acute{v}_p(n)}}\right\rfloor\quad \mbox{and}\quad \deg(v)=n-\left\lfloor\frac{n}{q^{\acute{v}_q(n)}}\right\rfloor.$$
Hence, we obtain the following equation
$\left\lfloor\frac{n}{p^{\acute{v}_p(n)}}\right\rfloor =\left\lfloor\frac{n}{q^{\acute{v}_q(n)}}\right\rfloor.$

Conversely, let $\left\lfloor\frac{n}{p^{\acute{v}_p(n)}}\right\rfloor =\left\lfloor\frac{n}{q^{\acute{v}_q(n)}}\right\rfloor$ and $\acute{v}_p(n)\leq t$, $\acute{v}_q(n)\leq k$. Then we get $$p^{\acute{v}_p(n)}\mid f(u) \quad \mbox{and} \quad q^{\acute{v}_q(n)}\mid f(v).$$
Thus, using Lemma \ref{lem3}, we have the following two equations $$\deg(u)=n-\left\lfloor\frac{n}{p^{\acute{v}_p(n)}}\right\rfloor\quad\mbox{and}\quad\deg(v)=n-\left\lfloor\frac{n}{q^{\acute{v}_q(n)}}\right\rfloor.$$ Hence, the proof follows.
\end{proof}

\begin{corollary}\label{thm eq-degree}
  Let $f(u)=p^t$ and $f(v)=p^k$ for some $t,k\in\mathbb{Z^+}$, where $u,v\in V(D_n)$ with $\deg(u)<n-1$, $\deg(v)<n-1$ and $p$ is a prime number. Then we have
 \begin{equation*}
    \deg(u)=\deg(v) \quad \mbox{if and only if} \quad \acute{v}_p(n)\leq \min\{t,k\}.
 \end{equation*}
\end{corollary}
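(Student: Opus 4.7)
The plan is to obtain this as an immediate specialization of Theorem \ref{thm eq-degree1} by taking $q = p$. Under the hypotheses $f(u)=p^t$, $f(v)=p^k$, $\deg(u)<n-1$, $\deg(v)<n-1$, Theorem \ref{thm eq-degree1} applied with the prime $q$ chosen equal to $p$ gives
\[
\deg(u)=\deg(v) \quad \Longleftrightarrow \quad \acute{v}_p(n)\leq t,\ \acute{v}_p(n)\leq k,\ \text{and}\ \left\lfloor\frac{n}{p^{\acute{v}_p(n)}}\right\rfloor = \left\lfloor\frac{n}{p^{\acute{v}_p(n)}}\right\rfloor.
\]

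The third condition is a tautology, so it drops out. The remaining two inequalities $\acute{v}_p(n)\leq t$ and $\acute{v}_p(n)\leq k$ are together equivalent to the single condition $\acute{v}_p(n)\leq \min\{t,k\}$, which is exactly the statement of the corollary.

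There is no real obstacle here: the whole content is already packaged inside Theorem \ref{thm eq-degree1}. The only thing to verify is that the theorem truly permits the choice $q=p$ (its statement does not insist that the two primes be distinct), which is transparent from its proof, since the argument uses Lemmas \ref{lem6} and \ref{lem3} separately on $u$ and on $v$ and never requires $p\neq q$. Hence the proof can be written in two or three lines, simply citing Theorem \ref{thm eq-degree1} and collapsing the two valuation conditions into a $\min$.
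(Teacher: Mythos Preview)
Your proposal is correct and matches the paper's intent: the corollary is stated immediately after Theorem \ref{thm eq-degree1} with no separate proof, precisely because it is the specialization $q=p$ you describe. Your observation that the proof of Theorem \ref{thm eq-degree1} treats $u$ and $v$ independently via Lemmas \ref{lem6} and \ref{lem3}, never using $p\neq q$, is exactly the justification needed.
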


\begin{theorem}\label{thm eq-degree3}
  Let $u,v$ be two vertices in $D_n$ such that $f(v)=tf(u)$  for some $t\in\mathbb{Z^+}$ and $f(v)$ is not a prime power. Then we have $\deg(u)=\deg(v)$ if and only if for every prime number $p\mid t$,
  \begin{equation*}
        \acute{v}_p(n)\leq v_p(f(u)) \quad \mbox{\textbf{or}} \quad \acute{v}_p(n)>v_p(f(v)).
  \end{equation*}
\end{theorem}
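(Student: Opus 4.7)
The plan is to combine Lemmas \ref{lem4} and \ref{lem5} with Lemma \ref{lem1} to reduce the degree equality to the existence (or non-existence) of a carefully chosen witness vertex $s$, and then read off the arithmetic condition on $p$-adic valuations.

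First, since $f(v)=tf(u)$ forces $f(u)\mid f(v)$, Lemma \ref{lem4} yields $N(u)\setminus\{v\}\supseteq N(v)\setminus\{u\}$; the equivalence noted after Lemma \ref{lem5} then gives that $\deg(u)=\deg(v)$ iff no vertex $s\in V(D_n)\setminus\{u,v\}$ satisfies both $su\in E(D_n)$ and $sv\notin E(D_n)$. So the theorem reduces to showing that such a witness $s$ exists if and only if there is a prime $p\mid t$ with $v_p(f(u))<\acute{v}_p(n)\leq v_p(f(v))$.

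For the ``only if'' direction I would translate the existence of $s$ via Lemma \ref{lem1}: the non-edge $sv$ yields a prime $p$ with $p^{\acute{v}_p(n)}\mid f(s)$ and $p^{\acute{v}_p(n)}\mid f(v)$, while $su\in E(D_n)$ forces $p^{\acute{v}_p(n)}\nmid f(u)$. Combined with $v_p(f(v))=v_p(t)+v_p(f(u))$, this gives $p\mid t$ and $v_p(f(u))<\acute{v}_p(n)\leq v_p(f(v))$, which is precisely the negation at $p$ of the condition in the statement.

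For the ``if'' direction, given such a prime $p$, I would set $f(s):=p^{\acute{v}_p(n)}$ and let $s$ be the corresponding vertex. Short $p$-adic computations (using $v_p(f(u))<\acute{v}_p(n)$ and $\acute{v}_p(n)=v_p(n)+1$) verify $(f(s),f(u))\mid n$ and $(f(s),f(v))\nmid n$, so $su\in E(D_n)$ and $sv\notin E(D_n)$. The main obstacle, and precisely where the hypothesis that $f(v)$ is not a prime power enters, is verifying that $s$ is a genuine third vertex: $p^{\acute{v}_p(n)}\leq f(v)\leq n$ is immediate, $p^{\acute{v}_p(n)}\neq f(u)$ follows from $v_p(f(u))<\acute{v}_p(n)$, while $p^{\acute{v}_p(n)}\neq f(v)$ holds exactly because $f(v)$ is not a prime power. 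I would close with a brief remark that when $p\mid t$ the two alternatives $\acute{v}_p(n)\leq v_p(f(u))$ and $\acute{v}_p(n)>v_p(f(v))$ cannot hold simultaneously (since $v_p(f(v))\geq v_p(f(u))+1$), which justifies the use of the exclusive \textbf{or} in the statement.
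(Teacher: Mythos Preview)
Your proposal is correct and follows essentially the same approach as the paper: both arguments use Lemmas \ref{lem4} and \ref{lem5} to reduce the degree equality to the (non-)existence of a vertex $s\notin\{u,v\}$ with $su\in E(D_n)$ and $sv\notin E(D_n)$, then in one direction construct the witness $s$ via $f(s)=p^{\acute{v}_p(n)}$ (where the hypothesis that $f(v)$ is not a prime power guarantees $s\neq v$), and in the other direction extract the prime $p$ from Lemma \ref{lem1}. The only difference is presentational---you frame the reduction once up front, while the paper argues each implication separately by contraposition.
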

\begin{proof}
Let $u,v$ be two vertices in $D_n$ such that $f(v)=tf(u)$ for some $t\in\mathbb{Z^+}$ and $f(v)$ is not a prime power. Proof by contraposition, suppose that there exists a prime number $p_0\mid t$ such that
\begin{equation*}
   \acute{v}_{p_0}(n)\leq v_{p_0}(f(u)) \quad \mbox{and} \quad \acute{v}_{p_0}(n)>v_{p_0}(f(v)),
\end{equation*}
or
\begin{equation*}
   \acute{v}_{p_0}(n)>v_{p_0}(f(u)) \quad \mbox{and} \quad \acute{v}_{p_0}(n)\leq v_{p_0}(f(v)).
\end{equation*}
The first case contradicts assumptions $f(u)\mid f(v)$. Therefore, let us consider the second case, which is
\begin{equation}\label{inequality2}
   v_{p_0}(f(u))<\acute{v}_{p_0}(n)\leq v_{p_0}(f(v)).
\end{equation}
Let a vertex $s\in V(D_n)$ such that $f(s)=p_0^{\acute{v}_{p_0}(n)}$. Then, using equation \eqref{inequality2}, we have
$p_0^{\acute{v}_{p_0}(n)}\nmid p_0^{\acute{v}_{p_0}(f(u))}.$
Since $p_0^{\acute{v}_{p_0}(f(u))}\mid f(u)$, we get $f(s)=p_0^{\acute{v}_{p_0}(n)}\nmid f(u)$. Consequently, $s\neq u$. In addition,  $s\neq v$, since $f(v)$ is not a prime power. Moreover, using equation \eqref{inequality2}, we get
$$(f(u), f(s))\mid n \quad \mbox{while} \quad (f(v), f(s))\nmid n.$$
Thus,
$su\in E(D_n) \ \mbox{while} \ sv\notin E(D_n).$
Since $f(u)\mid f(v)$ and using Lemma \ref{lem4}, we have $N(u)-\{v\}\supset N(v)-\{u\}$. Hence, $\deg(u)\neq\deg(v)$.

Conversely, suppose that, by contrapositive, $deg(u)\neq deg(v)$. Then, using Lemma \ref{lem4}, $N(u)-\{v\}\supset N(v)-\{u\}$. So, there exists a vertex $s\in V(D_n)-\{u,v\}$ such that
$s\in N(u)-\{v\} \ \mbox{and} \ s\notin N(v)-\{u\}.$
Consequently,
\begin{equation}\label{gcd}
    (f(u), f(s))\mid n \quad\mbox{and}\quad (f(v), f(s))\nmid n.
\end{equation}
Then, using Lemma \ref{lem1}, there exists a prime number $p_0$ such that
\begin{equation}\label{multip}
  p_0^{\acute{v}_{p_0}(n)}\nmid f(u) \quad\mbox{and}\quad f(v), f(s)\in M_{p_0^{\acute{v}_{p_0}(n)}}.
\end{equation}
Let $f(s)=p_0^{\acute{v}_{p_0}(n)}$. Then, using equation \eqref{multip}, we have $s\neq u$. Since $f(v)=t f(u)$, we get $p_0\mid t$ and $s\neq v$. Therefore, using equation \eqref{gcd}, we have
\begin{equation}\label{p-adic}
v_{p_0}\left((f(u), f(s)) \right)\leq v_{p_0}(n) \quad\mbox{and}\quad v_{p_0}\left((f(v), f(s)) \right)>v_{p_0}(n).
\end{equation}
Hence, using equation \eqref{p-adic}, we get
$$v_{p_0}(f(u))<\acute{v}_{p_0}(n) \quad\mbox{and}\quad v_{p_0}(f(v))\geq\acute{v}_{p_0}(n)$$
for some prime number $p_0\mid t$.
\end{proof}

\subsection{More Necessary and Sufficient Conditions for Equality of Degrees} The following theorem is a generalization of one direction of Theorem \ref{thm eq-degree1}.

\begin{theorem}\label{thm eq-degree2}
  Let $u,v$ be two vertices in $D_n$ with $\deg(u)<n-1$ and $\deg(v)<n-1$. If $\omega\big(f^*(u)\big)=\omega\big(f^*(v)\big)$ and for every prime number $p\mid f^*(u)$, there exists a prime number $q\mid f^*(v)$ such that $\left\lfloor\frac{n}{p^{\acute{v}_p(n)}}\right\rfloor=\left\lfloor\frac{n}{q^{\acute{v}_q(n)}}\right\rfloor$, then $\deg(u)=\deg(v)$, where $w(n)$ is the number of distinct prime factors of $n$.
\end{theorem}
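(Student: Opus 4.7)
The plan is to apply Theorem~\ref{deg(u)} to both $u$ and $v$ and show that the two alternating sums of floor values coincide term-by-term via a bijective matching of their prime divisors. Write $f^*(u)=\prod_{i=1}^{r}p_i^{k_i}$ and $f^*(v)=\prod_{j=1}^{r}q_j^{l_j}$; the common length $r\geq 1$ follows from $\omega(f^*(u))=\omega(f^*(v))$ together with the hypothesis $\deg(u),\deg(v)<n-1$ (via Theorem~\ref{lem2}, which rules out $f^*=1$).

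First, I would promote the one-sided hypothesis to a bijection. Picking, for each $p_i\mid f^*(u)$, some witness $\sigma(p_i)\in\{q_1,\dots,q_r\}$ with $\bigl\lfloor n/p_i^{\acute{v}_{p_i}(n)}\bigr\rfloor=\bigl\lfloor n/\sigma(p_i)^{\acute{v}_{\sigma(p_i)}(n)}\bigr\rfloor$, I claim $\sigma$ is injective. Indeed, if $\sigma(p_i)=\sigma(p_j)$ for some $i\neq j$, then $\bigl\lfloor n/p_i^{\acute{v}_{p_i}(n)}\bigr\rfloor=\bigl\lfloor n/p_j^{\acute{v}_{p_j}(n)}\bigr\rfloor$; since $p_i^{\acute{v}_{p_i}(n)}\neq p_j^{\acute{v}_{p_j}(n)}$, Lemma~\ref{lem1.14} forces $p_i^{\acute{v}_{p_i}(n)}\,p_j^{\acute{v}_{p_j}(n)}>n$. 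But Lemma~\ref{rem1}.i applied to $p_i,p_j\mid f^*(u)$ yields $p_i^{\acute{v}_{p_i}(n)},\,p_j^{\acute{v}_{p_j}(n)}\mid f(u)$, and coprimality of these prime powers gives $p_i^{\acute{v}_{p_i}(n)}\,p_j^{\acute{v}_{p_j}(n)}\mid f(u)\leq n$, a contradiction. Hence $\sigma$ is injective, and since both sides have $r$ elements, bijective. Lemma~\ref{lem1.15} reinforces the same conclusion in the delicate $r=2$ configuration where the prime sets on the two sides are disjoint, by directly forbidding $f(u),f(v)\le n$ under matched floor values.

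Second, I propagate the floor equality from single primes to subsets: for any non-empty $S\subseteq\{p_1,\dots,p_r\}$, I apply Lemma~\ref{lem1.12} once per element of $S$, swapping $p^{\acute{v}_p(n)}$ with $\sigma(p)^{\acute{v}_{\sigma(p)}(n)}$ one factor at a time while absorbing the remaining factors into the parameter $t$, to obtain
$$\left\lfloor\frac{n}{\prod_{p\in S}p^{\acute{v}_p(n)}}\right\rfloor=\left\lfloor\frac{n}{\prod_{q\in\sigma(S)}q^{\acute{v}_{q}(n)}}\right\rfloor.$$
Expanding $\deg(u)$ and $\deg(v)$ via Theorem~\ref{deg(u)} and pairing subsets of equal cardinality through $\sigma$ matches the alternating sums term-by-term, yielding $\deg(u)=\deg(v)$. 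The main obstacle is the bijectivity step: the hypothesis is one-sided, and forcing injectivity requires combining the divisibility content of Lemma~\ref{rem1}.i with the numerical constraints of Lemmas~\ref{lem1.14}/\ref{lem1.15}; once $\sigma$ is in place, the subset propagation via Lemma~\ref{lem1.12} and the termwise comparison of sums are mechanical.
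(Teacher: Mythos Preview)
Your argument is correct and, in fact, tidier than the paper's. Both proofs rest on the same three ingredients---the degree formula (Theorem~\ref{deg(u)}), the multiplicativity lemma for floors (Lemma~\ref{lem1.12}), and the product bound of Lemma~\ref{lem1.14}/\ref{lem1.15}---but they are organised differently. The paper first passes to auxiliary vertices $\grave u,\grave v$ whose labels are exactly $\prod p^{\acute v_p(n)}$ over the primes of $f^*(u),f^*(v)$, shows $\deg(u)=\deg(\grave u)$ and $\deg(v)=\deg(\grave v)$, and then performs a case split on how many prime factors of $f^*(\grave u)$ and $f^*(\grave v)$ differ: zero (trivial), one (handled by the degree formula plus Lemma~\ref{lem1.12}), and two or more (declared impossible via Lemma~\ref{lem1.15}). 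Your route replaces this case analysis by a single injectivity argument: if two distinct $p_i,p_j\mid f^*(u)$ mapped to the same $q$, Lemma~\ref{lem1.14} would force $p_i^{\acute v_{p_i}(n)}p_j^{\acute v_{p_j}(n)}>n$, contradicting that both critical powers divide $f(u)\le n$ by Lemma~\ref{rem1}.i. That gives a bijection $\sigma$ in one stroke, after which the factor-by-factor swap via Lemma~\ref{lem1.12} matches every term of the two inclusion--exclusion sums uniformly, with no separate treatment of Cases~1--3.

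Two small remarks. First, your appeal to Lemma~\ref{lem1.15} is superfluous: once injectivity is secured through Lemma~\ref{lem1.14}, the $r=2$ disjoint-prime configuration needs no special handling, so you can drop that sentence. Second, in the swap step it is worth noting explicitly that at stage $m$ the factor $p_{i_m}^{\acute v_{p_{i_m}}(n)}$ is still present in the current product, so the quotient $t$ fed into Lemma~\ref{lem1.12} is indeed a positive integer even when the sets $\{p_i\}$ and $\{q_j\}$ overlap; this is true but deserves a word.
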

\begin{proof}
Let $u,v$ be two vertices in $D_n$ such that $\deg(u)<n-1$, $\deg(v)<n-1$  and
\begin{equation}\label{w=w}
  \omega\big(f^*(u)\big)=\omega\big(f^*(v)\big).
\end{equation}
Then, using Lemma \ref{lem6}, there exists a prime number $p$ such that $p^{\acute{v}_p(n)}\mid f(u).$ Therefore, using Lemma \ref{rem1}, part i., there exists a prime number $p$ such that $p\mid f^*(u)$. Consequently, It follows from our assumption that there exists a prime number $q$ such that $q\mid f^*(v)$ and
\begin{equation}\label{equalfloor}
\left\lfloor\frac{n}{p^{\acute{v}_p(n)}}\right\rfloor=\left\lfloor\frac{n}{q^{\acute{v}_q(n)}}\right\rfloor.
\end{equation}
Moreover, $\omega\big(f^*(u)\big)=\omega\big(f^*(v)\big)>0.$ Since $p^{\acute{v}_p(n)}\mid f(u)$ and $q^{\acute{v}_q(n)}\mid f(v)$ and using Theorem \ref{FTA}, therefore there exist $\grave{u},\grave{v}\in V(D_n)$ such that
\begin{equation*}
f(\grave{u}):=\prod\limits_{p\mid f^*(u)}p^{\acute{v}_p(n)}\ \bigg| \ f(u) \quad \mbox{and} \quad f(\grave{v}):=\prod\limits_{q\mid f^*(v)}q^{\acute{v}_q(n)} \ \bigg| \ f(v).
\end{equation*}
By definition of $f(\grave{u})$ and $f(\grave{v})$, we have that the reduced labels $f^*(u),f^*(\grave{u})$ possess identical prime divisors and also the reduced labels $f^*(v),f^*(\grave{v})$ have the same prime factors. Therefore, using equation \eqref{w=w},
 \begin{equation}\label{equality2}
\omega\big(f^*(\grave{u})\big)=\omega\big(f^*(u)\big) =\omega\big(f^*(v)\big) =\omega\big(f^*(\grave{v})\big).
\end{equation}
Furthermore, for every prime number $p$, we have
 $$p^{\acute{v}_{p}(n)}\mid f(u) \ \mbox{if and only if} \ p^{\acute{v}_{p}(n)}\mid f(\grave{u}).$$
Therefore,
\begin{equation*}
\bigcup\limits_{p|f^*(u)}M_{p^{\acute{v}_{p}(n)}}=\bigcup\limits_{p|f^*(\grave{u})}M_{p^{\acute{v}_{p}(n)}}.
\end{equation*}
Consequently, using Lemma \ref{lem1}, for each vertex $s\in V(D_n)-\{u, \ \grave{u}\}$, we have
$$su\in E(D_n) \quad\mbox{if and only if}\quad s\grave{u}\in E(D_n).$$
Hence,
\begin{equation}\label{equal-deg1}
    \deg(u)=\deg(\grave{u}).
\end{equation}
Similarly, we get
\begin{equation}\label{equal-deg2}
    \deg(v)=\deg(\grave{v}).
\end{equation}
Let $d=(f(\grave{u}), f(\grave{v}))$. Thus, Lemmas \ref{rem1}, part $ii.$, implies that
\begin{equation}\label{divide1}
\frac{d}{(d, n)} \ \bigg| \ f^*(\grave{u}),  f^*(\grave{v}),
\end{equation}
and hence
$\omega\left(\frac{d}{(d, n)}\right)\leq\omega\left(f^*(\grave{u})\right)=\omega\left(f^*(\grave{v})\right).$
Let us consider the following cases.\\
Case 1: Let $\omega\left(f^*(\grave{u})\right)=\omega\left(f^*(\grave{v})\right)=\omega\left(\frac{d}{(d, n)}\right).$ Then, using equation \eqref{divide1}, $\frac{d}{(d, n)}$, $f^*(u)$ and $f^*(v)$ have the same prime factors. A similar argument used to prove equation \eqref{equal-deg1} implies that
$\deg(\grave{u}) = \deg(\grave{v}).$
Hence, using equations \eqref{equal-deg1} and \eqref{equal-deg2}, we get
$\deg(u)=\deg(v).$\\
Case 2: Let $\omega\big(f^*(\grave{u})\big)=\omega\big(f^*(\grave{v})\big)=\omega\left(\frac{d}{(d, n)}\right)+1=r.$ Then we get $f^*(\grave{u})$ and $f^*(\grave{v})$ differ in exactly one prime factor. Therefore, using Theorem \ref{FTA}, we obtain the following two equations
\begin{equation*}
f(\grave{u})=\prod\limits_{i=1}^{r-1}h_i^{\acute{v}_{h_i}(n)}p^{\acute{v}_p(n)} \quad \mbox{and} \quad f(\grave{v})=\prod\limits_{i=1}^{r-1}h_i^{\acute{v}_{h_i}(n)}q^{\acute{v}_q(n)},
\end{equation*}
where $h_i, p$ and $q$ are distinct prime numbers such that $h_i\mid d, p\nmid d$ and $q\nmid d$ for all $i=1, 2, \dots, r-1$. Then, using Theorem $\ref{deg(u)}$, we have the following two equations
\begin{equation}\label{eqn1}
\begin{split}
  \deg(\grave{u})=n&-\sum_{1\leq i\leq r-1}\left\lfloor\frac{n}{h_i^{\acute{v}_{h_i}(n)}}\right\rfloor-\left\lfloor\frac{n}{p^{\acute{v}_{p}(n)}}\right\rfloor
  +\sum_{1\leq i<j\leq r-1}\left\lfloor\frac{n}{h_i^{\acute{v}_{h_i}(n)}h_j^{\acute{v}_{h_j}(n)}}\right\rfloor\\
  &+\sum_{1\leq i\leq r-1} \left\lfloor\frac{n}{h_i^{\acute{v}_{h_i}(n)}p^{\acute{v}_{p}(n)}}\right\rfloor
  -\dots+(-1)^{r}\left\lfloor\frac{n}{\prod\limits_{1\leq i\leq r-1} h_i^{\acute{v}_{h_i}(n)}p^{\acute{v}_{p}(n)}}\right\rfloor,
\end{split}
\end{equation}
and
\begin{equation}\label{eqn2}
\begin{split}
  \deg(\grave{v})=n&-\sum_{1\leq i\leq r-1}\left\lfloor\frac{n}{h_i^{\acute{v}_{h_i}(n)}}\right\rfloor-\left\lfloor\frac{n}{q^{\acute{v}_{q}(n)}}\right\rfloor
  +\sum_{1\leq i<j\leq r-1}\left\lfloor\frac{n}{h_i^{\acute{v}_{h_i}(n)}h_j^{\acute{v}_{h_j}(n)}}\right\rfloor\\
  &+\sum_{1\leq i\leq r-1} \left\lfloor\frac{n}{h_i^{\acute{v}_{h_i}(n)}q^{\acute{v}_{q}(n)}}\right\rfloor
  -\dots+(-1)^{r}\left\lfloor\frac{n}{\prod\limits_{1\leq i\leq r-1} h_i^{\acute{v}_{h_i}(n)}q^{\acute{v}_{q}(n)}}\right\rfloor.
\end{split}
\end{equation}
Using Lemma \ref{lem1.12} and equations (\ref{equalfloor}), (\ref{eqn1}) and (\ref{eqn2}), we get
$\deg(\grave{u})=\deg(\grave{v}),$
and hence
$\deg(u)=\deg(v).$\\
Case 3: Let $\omega\big(f^*(\grave{u})\big)=\omega\big(f^*(\grave{v})\big)=\omega\left(\frac{d}{(d, n)}\right)+2=r.$ Then $f^*(\grave{u})$ and $f^*(\grave{v})$ differ in exactly two prime factors. Therefore, using Theorem \ref{FTA}, we have the following two equations
\begin{equation*}
f(\grave{u})=\prod\limits_{i=1}^{r-2}h_i^{\acute{v}_{h_i}(n)}p_1^{\acute{v}_{p_1}(n)}p_2^{\acute{v}_{p_2}(n)} \quad \mbox{and} \quad f(\grave{v})=\prod\limits_{i=1}^{r-2}h_i^{\acute{v}_{h_i}(n)}q_1^{\acute{v}_{q_1}(n)}q_2^{\acute{v}_{q_1}(n)},
\end{equation*}
where $h_i, p_1, p_2, q_1$ and $q_2$ are distinct prime numbers satisfying $h_i\mid d$, $p_1\nmid d$, $p_2\nmid d$, $q_1\nmid d$ and $q_2\nmid d$ for all $i=1, 2, \dots, r-2$ such that
$$\left\lfloor\frac{n}{p_1^{\acute{v}_{p_1}(n)}}\right\rfloor =\left\lfloor\frac{n}{q_1^{\acute{v}_{q_1}(n)}}\right\rfloor\quad\mbox{and} \quad \left\lfloor\frac{n}{p_2^{\acute{v}_{p_2}(n)}}\right\rfloor =\left\lfloor\frac{n}{q_2^{\acute{v}_{q_2}(n)}}\right\rfloor.$$
Define
$a:=p_1^{\acute{v}_{p_1}(n)}p_2^{\acute{v}_{p_2}(n)} \ \mbox{and} \  b:=q_1^{\acute{v}_{q_1}(n)}q_2^{\acute{v}_{q_1}(n)}.$
Since
$\frac{a}{(a,n)}=p_1p_2 \ \mbox{and} \ \frac{b}{(b,n)}=q_1q_2,$
therefore Lemma \ref{lem1.15} implies that $a>n \ \mbox{or} \   b>n.$ Hence,
$f(\grave{u})\geq a>n \ \mbox{or} \ f(\grave{v})\geq b>n,$
which is absurd.\\
The cases $\omega\big(f^*(\grave{u})\big)=\omega\big(f^*(\grave{v})\big)=\omega\left(\frac{d}{(d, n)}\right)+i$, for every $i>3$ can be treated in a very similar way of case $3$ which are absurd. Hence the proof follows.
\end{proof}

\begin{example}
  In $D_{20}$, let $f(u)=14$ and $f(v)=16$, where $u,v\in V(D_{20})$, so $f^*(u)=7$ and $f^*(v)=8$. Then, applying Theorem \ref{thm eq-degree2}, we have $\omega\left(7\right)=\omega\left(8\right)=1$ and $\left\lfloor\frac{20}{7}\right\rfloor=\left\lfloor\frac{20}{8}\right\rfloor=2.$ Hence, $\deg(u)=\deg(v)=18$, (by applying Lemma \ref{lem3}).
\end{example}

The converse of Theorem \ref{thm eq-degree2} is not true. For instance, in $D_{23}$, let $f(u)=10$, $f(v)=14$, where $u,v\in V(D_{23})$. Then, by Theorem \ref{deg(u)}, we have $deg(u)=13=deg(v)$. However, it can be verified that $\left\lfloor\frac{23}{5}\right\rfloor\neq\left\lfloor\frac{23}{7}\right\rfloor$.

\begin{corollary}\label{thm_eq-deq1}
  Let $u,v$ be two vertices in $D_n$ with $f^*(u)>1$, $f^*(v)>1$. If $f^*(u),f^*(v)$ have the same prime factors, then $\deg(u)=\deg(v)$.
\end{corollary}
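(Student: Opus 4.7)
The plan is to derive this corollary almost directly from Theorem \ref{deg(u)} by observing that the explicit degree formula for the case $f^*(u) > 1$ depends only on the set of distinct prime divisors of $f^*(u)$ and not on their multiplicities. Since $f^*(u), f^*(v) > 1$, I would write $f^*(u) = \prod_{i=1}^{r} p_i^{k_i}$ and, using the hypothesis that $f^*(v)$ has exactly the same prime factors, $f^*(v) = \prod_{i=1}^{r} p_i^{k'_i}$ over the identical set of distinct primes $p_1, \ldots, p_r$, with $k_i, k'_i \geq 1$.

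Applying the $f^*(\cdot) > 1$ branch of Theorem \ref{deg(u)} to both vertices produces
$$\deg(u) = n - \sum_{1 \leq i \leq r}\left\lfloor \frac{n}{p_i^{\acute{v}_{p_i}(n)}} \right\rfloor + \sum_{1 \leq i < j \leq r}\left\lfloor \frac{n}{p_i^{\acute{v}_{p_i}(n)} p_j^{\acute{v}_{p_j}(n)}} \right\rfloor - \cdots + (-1)^r \left\lfloor \frac{n}{\prod_{i=1}^{r} p_i^{\acute{v}_{p_i}(n)}} \right\rfloor,$$
and the identical expression for $\deg(v)$. Since the exponents $k_i, k'_i$ nowhere appear in either sum, the two expressions coincide term by term, yielding $\deg(u) = \deg(v)$ immediately.

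I do not anticipate any real obstacle here; once Theorem \ref{deg(u)} is available, the corollary is essentially a restatement of the fact that only the \emph{set} of primes in the prime factorization of $f^*$ governs the degree formula. As an alternative, one could invoke Theorem \ref{thm eq-degree2} by choosing $q := p$ for each prime $p \mid f^*(u)$, which trivially produces the floor equality and $\omega(f^*(u)) = \omega(f^*(v))$; the minor inconvenience is that Theorem \ref{thm eq-degree2} carries the extra hypothesis $\deg(u), \deg(v) < n-1$, which would force a brief separate check (if $\deg(u) = n-1$ with $f^*(u) > 1$, then Theorem \ref{lem2} forces $f(u) = p^{\acute{v}_p(n)}$ with $n/2 < f(u) < n$, and the shared-prime hypothesis together with $p^{\acute{v}_p(n)} > n/2$ forces $f(v) = f(u)$, i.e.\ $u = v$, making the equality vacuous). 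The direct route through Theorem \ref{deg(u)} bypasses this bookkeeping entirely and is the cleanest path.
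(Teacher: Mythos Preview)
Your argument is correct. The paper states the corollary immediately after Theorem~\ref{thm eq-degree2} and offers no separate proof, so the intended derivation is the one you list as your alternative: take $q:=p$ for each prime $p\mid f^*(u)$, which makes the floor equality and $\omega(f^*(u))=\omega(f^*(v))$ automatic. Your primary route, reading the conclusion straight off the $f^*>1$ branch of Theorem~\ref{deg(u)}, is a genuinely more direct path: it exploits the fact that the inclusion--exclusion expression depends only on the set $\{p_1,\dots,p_r\}$ and not on the exponents, so the two degree formulas are literally identical.

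What your approach buys is that it sidesteps the hypothesis $\deg(u),\deg(v)<n-1$ carried by Theorem~\ref{thm eq-degree2}, which the corollary itself does not assume. You handle this correctly in your parenthetical (if $\deg(u)=n-1$ with $f^*(u)>1$, Theorem~\ref{lem2} forces $f(u)=p^{\acute{v}_p(n)}$ with $n/2<f(u)<n$; then $p\mid f^*(v)$ gives $p^{\acute{v}_p(n)}\mid f(v)\le n$, hence $f(v)=f(u)$ and $u=v$), but going through Theorem~\ref{deg(u)} makes that case analysis unnecessary. The paper's route has the advantage of presenting the corollary as the ``diagonal'' instance of the more general matching-of-primes criterion in Theorem~\ref{thm eq-degree2}; your route is shorter and self-contained.
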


\begin{corollary}\label{cor1}
  Let $u,v$ be two vertices in $D_n$ such that $f(v)=tf(u)$  for some $t\in \mathbb{Z^+}$. If $(t, f(u))=1$ and $t\mid n$, then $\deg(u)=\deg(v)$.
\end{corollary}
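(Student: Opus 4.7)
The plan is to prove the stronger statement $N(u)-\{v\}=N(v)-\{u\}$, from which the equality of degrees follows immediately (regardless of whether $uv$ is an edge).

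Since $f(v)=tf(u)$ we have $f(u)\mid f(v)$, so Lemma \ref{lem4} already gives the inclusion $N(u)-\{v\}\supseteq N(v)-\{u\}$. The entire content of the corollary therefore reduces to the reverse inclusion: for every $s\in V(D_n)-\{u,v\}$ with $su\in E(D_n)$, I must show $sv\in E(D_n)$, i.e., $(f(s),f(v))\mid n$. I would check this prime by prime, verifying $v_p\bigl((f(s),f(v))\bigr)\le v_p(n)$ for each prime $p$ via a two-case split dictated by whether $p\mid t$.

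If $p\nmid t$, then $v_p(f(v))=v_p(t)+v_p(f(u))=v_p(f(u))$, so $v_p\bigl((f(s),f(v))\bigr)=v_p\bigl((f(s),f(u))\bigr)\le v_p(n)$ using the hypothesis $(f(s),f(u))\mid n$. If $p\mid t$, the coprimality $(t,f(u))=1$ forces $v_p(f(u))=0$, hence $v_p(f(v))=v_p(t)$, and therefore
\[
v_p\bigl((f(s),f(v))\bigr)=\min\bigl\{v_p(f(s)),v_p(t)\bigr\}\le v_p(t)\le v_p(n),
\]
the last inequality being the assumption $t\mid n$. Both cases give the required bound, so $(f(s),f(v))\mid n$ and $sv\in E(D_n)$.

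There is no real obstacle: the argument hinges entirely on the prime-by-prime decomposition of the gcd, combined with the two divisibility hypotheses $(t,f(u))=1$ and $t\mid n$, which were precisely tailored to make the case analysis go through. I deliberately avoid routing through Theorem \ref{thm eq-degree3}, since that theorem assumes $f(v)$ is not a prime power and would force a separate treatment (using Theorem \ref{lem2}) in the degenerate case $f(u)=1$, $f(v)=t$ a prime power dividing $n$; the direct $p$-adic argument handles every case uniformly.
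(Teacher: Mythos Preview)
Your proof is correct, and it takes a genuinely different route from the paper's. The paper argues by showing that the hypotheses force $f^*(v)=f^*(u)$: since $t\mid n$ and $(t,f(u))=1$ one has $(tf(u),n)=t\,(f(u),n)$, whence
\[
f^*(v)=\frac{tf(u)}{(tf(u),n)}=\frac{tf(u)}{t\,(f(u),n)}=f^*(u),
\]
and then invokes Corollary~\ref{thm_eq-deq1} (hence ultimately Theorem~\ref{thm eq-degree2}) when $f^*(u)>1$, together with Theorem~\ref{lem2} when $f^*(u)=1$. Your argument instead proves the stronger neighborhood identity $N(u)-\{v\}=N(v)-\{u\}$ directly, by a two-case $p$-adic check that uses nothing beyond Lemma~\ref{lem4} and the definition of the edge relation. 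This is more elementary and self-contained---it bypasses the entire machinery of reduced labels and Theorem~\ref{thm eq-degree2}---and yields a sharper conclusion (equal neighborhoods, not merely equal degrees). The paper's route, on the other hand, makes visible the conceptual reason behind the result: the two vertices have literally the same reduced label, which situates the corollary within the broader theory of $f^*$ developed in the section.
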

\begin{proof}
Let $u,v$ be two vertices in $D_n$ such that $f(v)=tf(u)$  for some $t\in \mathbb{Z^+}$, $(t, f(u))=1$ and $t\mid n$. The reduced labels $f^*(v),f^*(u)$ satisfy the following equalities.
  $$f^*(v)=\frac{f(v)}{(f(v),n)}=\frac{tf(u)}{(tf(u),n)}=\frac{tf(u)}{t(f(u),n)}=\frac{f(u)}{(f(u),n)}=f^*(u).$$
Therefore,  $f^*(u),f^*(v)$ have the same prime factors \textbf{or} $f^*(u)=f^*(v)=1$. Hence, using Corollary \ref{thm_eq-deq1} and Theorem \ref{lem2}, the proof follows.
\end{proof}

The following two examples show that the converse of Corollary \ref{thm_eq-deq1} is not true.

\begin{example}
  Let $u,v$ be two different vertices in $D_n$ such that $f(u)\mid f(v)$ and $f(v)=p^{\acute{v}_{p}(n)}$, $\frac{n}{2}<f(v)<n$, where $p$ is a prime number. Then, using Theorem \ref{lem2}, we have
  $\deg(u)=n-1=\deg(v).$
  However, $f^*(u)=\frac{f(u)}{(f(u),n)}=1$ and $f^*(v)=\frac{p^{\acute{v}_{p}(n)}}{\left(p^{\acute{v}_{p}(n)},n\right)}=p$ have not the same prime factors.
\end{example}

\begin{example}
  Let $u,v$ be two vertices in $D_n$ such that $f(u)=2p^{\acute{v}_{p}(n)}$ and $f(v)=2q^{\acute{v}_{q}(n)}$, $\frac{n}{3}<p^{\acute{v}_{p}(n)}, q^{\acute{v}_{q}(n)}<\frac{n}{2}$, where $p,q$ are distinguish prime numbers, this means $f(u)\nmid f(v)$. Then, using Lemma \ref{lem3} and Lemmas \ref{rem1}, part $i.$, we have
  $$\deg(u)=n-\left\lfloor\frac{n}{p^{\acute{v}_p(n)}}\right\rfloor= n-2 =n-\left\lfloor\frac{n}{q^{\acute{v}_q(n)}}\right\rfloor =\deg(v).$$
 However, $f^*(u)=p$ and $f^*(v)=q$ have distinct prime factors.
\end{example}

The last two examples motivate the following partial converse of Corollary \ref{thm_eq-deq1}.

\begin{theorem}\label{thm_eq-deq2}
  Let $u,v$ be two vertices in $D_n$ such that $f(u)\mid f(v)$, $f(v)$ is not a prime power and $f^*(u)>1$. If $\deg(u)=\deg(v)$, then $f^*(u),f^*(v)$ have the same prime factors.
\end{theorem}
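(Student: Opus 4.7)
The plan is to argue by contradiction: assume some prime $q$ divides $f^*(v)$ but not $f^*(u)$, then construct a vertex $s$ that is adjacent to $u$ but not to $v$, producing a discrepancy between $N(u)-\{v\}$ and $N(v)-\{u\}$.

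First I would note that from $f(u)\mid f(v)$ together with Lemma \ref{rem1}, part $ii.$, one gets $f^*(u)\mid f^*(v)$, so every prime dividing $f^*(u)$ already divides $f^*(v)$; only the reverse inclusion of prime factors is at stake. Combining $f(u)\mid f(v)$ with the hypothesis $\deg(u)=\deg(v)$, Lemma \ref{lem5} yields the structural identity $N(u)-\{v\}=N(v)-\{u\}$, which will be the object to contradict.

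Now suppose, for the sake of contradiction, there exists a prime $q$ with $q\mid f^*(v)$ and $q\nmid f^*(u)$. By Lemma \ref{rem1}, part $i.$, we have $q^{\acute{v}_q(n)}\mid f(v)\leq n$, so $q^{\acute{v}_q(n)}\leq n$ and therefore some vertex $s\in V(D_n)$ carries the label $f(s)=q^{\acute{v}_q(n)}$. I would then check that $s\notin\{u,v\}$: the condition $q\nmid f^*(u)$ forces $q^{\acute{v}_q(n)}\nmid f(u)$ (again by Lemma \ref{rem1}, part $i.$), so $f(s)\neq f(u)$, hence $s\neq u$; and since $f(v)$ is not a prime power while $f(s)$ is, we have $s\neq v$. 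This is precisely where the hypothesis that $f(v)$ is not a prime power is used.

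Finally, a short $p$-adic valuation calculation finishes the argument. Since $q\nmid f^*(u)$ gives $v_q(f(u))\leq v_q(n)$, one obtains $(f(s),f(u))=q^{v_q(f(u))}\mid q^{v_q(n)}\mid n$, so $su\in E(D_n)$; on the other hand $(f(s),f(v))=q^{\acute{v}_q(n)}\nmid n$, so $sv\notin E(D_n)$ by Lemma \ref{lem1}. Thus $s\in N(u)-\{v\}$ while $s\notin N(v)-\{u\}$, contradicting the equality established at the start. The main obstacle is really just bookkeeping, namely selecting the witness $s$ so that it is distinct from both $u$ and $v$; the two side hypotheses (that $f(v)$ is not a prime power, and $f^*(u)>1$) are calibrated exactly so that such a witness exists and so that the conclusion about common prime factors is non-vacuous.
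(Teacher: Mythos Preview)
Your proposal is correct and follows essentially the same route as the paper's proof: both use Lemma~\ref{rem1}(ii) to reduce to showing that every prime dividing $f^*(v)$ also divides $f^*(u)$, both invoke Lemma~\ref{lem5} to obtain $N(u)-\{v\}=N(v)-\{u\}$, and both take the witness $s$ with $f(s)=q^{\acute{v}_q(n)}$ and compare its adjacencies to $u$ and $v$. The only cosmetic difference is that the paper argues directly (for an arbitrary prime $p\mid f^*(v)$ it shows $p\mid f^*(u)$) whereas you package the same step as a contradiction; your version is in fact slightly more explicit in verifying $s\neq u$, a point the paper leaves implicit.
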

\begin{proof}
Let $u,v$ be two vertices in $D_n$ such that $f(u)\mid f(v)$, $f(v)$ is not a prime power, $\deg(u)=\deg(v)$ and $f^*(u)>1$. Then, using Lemmas \ref{rem1}, part $ii.$, we have $f^*(u)\mid f^*(v)$. Consequently, $f^*(v)>1$ and also any prime factor of $f^*(u)$ is a prime factor of $f^*(v)$. Let there exists a prime number $p$ such that $p\mid f^*(v)$. Then, using Lemmas \ref{rem1}, part $i.$, $p^{\acute{v}_{p}(n)}\mid f(v)$, i.e., $f(v)\in M_{p^{\acute{v}_{p}(n)}}$. Let a vertex $s\in V(D_n)$ such that $f(s)=p^{\acute{v}_{p}(n)}\in M_{p^{\acute{v}_{p}(n)}}.$ Since $f(v)$ is divisible by at least two distinct prime numbers, we get $s\neq v$. Using Lemma \ref{lem1}, we have $(f(v), f(s))\nmid n$. Therefore, $s\notin N(v)-\{u\}$. Consequently, using Lemma \ref{lem5}, we obtain $s\notin N(u)-\{v\}$. Then we get $\left(f(u),f(s)\right)\nmid n$. Thus, $f(s)=p^{\acute{v}_{p}(n)}\mid f(u)$. Consequently, using Lemmas \ref{rem1}, part $i.$, $p\mid f^*(u)$. Therefore, for every prime number $p$,
$$p\mid f^*(u)\quad \mbox{if and only if}\quad p\mid f^*(v).$$
Hence, the proof follows.
\end{proof}

\section{Conclusions}
 This manuscript introduces the concept of Diophantine labelling and establishes a connection between Diophantine graphs and prime graphs. We study further define maximal Diophantine graphs and derives explicit formulas for their edge count and vertex degrees. Additionally, we provide necessary and sufficient conditions for the equality of vertex degrees. We also utilize number-theoretic concepts, such as p-adic valuation, to analyze vertex labels and the properties of Diophantine graphs. Although Diophantine labelling is still primarily theoretical, we think that Diophantine labelling sits at an intersection of graph theory, number theory and combinatorics. Furthermore, exploring different variants of Diophantine labelling can further enrich the theory and uncover new connections between these mathematical domains. As research advances, we expect that Diophantine labelling has more concrete applications in the future. Moreover, the algorithmic approach is promising in many directions, including searching for an efficient algorithm for Diophantine graph and finding maximal Diophantine graphs.
%

\subsection*{Acknowledgment}
The author would like to extend their gratitude to the referees for their valuable feedback, insightful comments, and constructive suggestions, which significantly contributed to improving the quality and clarity of this work.



\end{document}